\date{}
\newlength{\defbaselineskip}
\newcommand{\setlinespacing}[1]%
           {\setlength{\baselineskip}{#1 \defbaselineskip}}
\newcommand{\mspan}{\operatorname{span}}
\newcommand{\N}{{\mathbb{N}}}
\newcommand{\actaqed}{\hfill $\actabox$}
{\medskip\noindent \textit{Proof of #1. }}%
{\actaqed \medskip}
\def\D{{\mathcal D}}
\def\cA{{\mathcal A}}
\def\cC{{\mathcal C}}
\def\cF{{\mathcal F}}
\def\FF{{\mathcal F}}
\def\cH{{\mathcal H}}
\def\CW{\mathcal{W}}
\def \Tr{\mathcal T}
\def \cN{\mathcal N}
\def \cS{\mathcal S}
\def \cX{\mathcal X}
\def\R{{\mathbb R}}
\def\Z{\mathbb Z}
\def\ZZ{\mathbb Z}
\def \T{\mathbb T}
\def \<{\langle}
\def\>{\rangle}
\def \Og{\Omega}
\def \e{\varepsilon}
\def \va{\varepsilon}
\def \ta{\theta}
\def \ff{\varphi}
\def\ga{\gamma}
\def \sp{\operatorname{span}}
\def\bx{\mathbf x}
\def\bk{\mathbf k}
\def\bs{\mathbf s}
\newtheorem{Theorem}{Theorem}[section]
\newtheorem{Lemma}{Lemma}[section]
\newtheorem{Remark}{Remark}[section]
\newtheorem{Corollary}{Corollary}[section]
\numberwithin{equation}{section}
\newcommand{\be}{\begin{equation}}
\newcommand{\ee}{\end{equation}}
\begin{document}

\title{Universal sampling discretization}

\author{ F. Dai and   V. Temlyakov 	\footnote{
		The first named author's research was partially supported by NSERC of Canada Discovery Grant
		RGPIN-2020-03909.
		The second named author's research was supported by the Russian Federation Government Grant No. 14.W03.31.0031.
  }}

\newcommand{\Addresses}{{
  \bigskip
  \footnotesize

  F.~Dai, \textsc{ Department of Mathematical and Statistical Sciences\\
University of Alberta\\ Edmonton, Alberta T6G 2G1, Canada\\
E-mail:} \texttt{fdai@ualberta.ca }

 \medskip
  V.N. Temlyakov, \textsc{University of South Carolina,\\ Steklov Institute of Mathematics,\\  Lomonosov Moscow State University,\\ and Moscow Center for Fundamental and Applied Mathematics.
  \\
E-mail:} \texttt{temlyak@math.sc.edu}

}}
\maketitle

\begin{abstract}
	{Let  $X_N$ be an $N$-dimensional subspace of $L_2$ functions on a probability space  $(\Og, \mu)$ spanned by  a uniformly bounded Riesz basis $\Phi_N$.  Given an integer   $1\leq v\leq N$ and an exponent $1\leq q\leq 2$, we obtain  universal discretization for
		integral norms $L_q(\Og,\mu)$ of functions from the collection  of  all  subspaces of $X_N$  spanned by $v$ elements of $\Phi_N$ with the number $m$  of required  points satisfying $m\ll  v(\log N)^2(\log v)^2$. 
		This last  bound on $m$   is much better than previously known bounds  which are  quadratic in $v$.   Our  proof uses  a  conditional theorem  on universal sampling discretization,  and an inequality of     entropy numbers  in terms of   greedy approximation  with respect to dictionaries.
	}
\end{abstract}

{\it Keywords and phrases}: Sampling discretization, universality, entropy numbers.

{\it MSC classification 2000:} Primary 65J05; Secondary 42A05, 65D30, 41A63.

%
%
%

\section{Introduction}
\label{I}

A standard approach to solving a continuous problem numerically -- the Galerkin method -- suggests to look for an approximate solution from a given finite-dimensional subspace. A typical way to measure an error of approximation is an appropriate $L_p$ norm, $1\le p\le\infty$. Thus, the problem of   discretization of the $L_p$ norms of functions from a given finite-dimensional subspace arises in a very natural way. Approximation by elements from a linear subspace falls in the category of {\it linear approximation}. 

It was understood in numerical analysis and approximation theory that in many problems from signal/image processing it is beneficial to use an $m$-term approximant with respect to a given system of elements (dictionary) $\D_N:=\{g_i\}_{i=1}^N$. This means that for $f\in X$ we look for an approximant of the form
\begin{equation}\label{P2}
a_m(f):=\sum_{k\in\Lambda(f)}c_kg_k
\end{equation}
where $\Lambda(f) \subset [1,N]$ is a set of $m$ indices which is determined by $f$. The complexity of this approximant is characterized by the cardinality $|\Lambda(f)|=m$ of $\Lambda(f)$. Approximation of this type is referred to as {\it nonlinear approximation} because,  for a fixed $m$, approximants $a_m(f)$ come from different linear subspaces spanned by $g_k$, $k\in \Lambda(f)$, which depend on $f$. The cardinality $|\Lambda(f)|$ is a fundamental characteristic of $a_m(f)$ called {\it sparsity} of $a_m(f)$ with respect to $\D_N$. It is now well understood that we need to study nonlinear sparse approximations in  order to significantly increase our ability to process (compress, denoise, etc.) large data sets.  Sparse approximations of a function  are not only a powerful analytic tool but they are utilized in many applications in image/signal processing and numerical computation. 

Therefore, here is an important ingredient of the discretization problem, desirable in practical applications. Suppose we have a finite dictionary $\D_N:=\{g_j\}_{j=1}^N$ of functions from $L_p(\Omega,\mu)$. Applying our strategy of sparse $m$-term approximation with respect to 
$\D_N$ we obtain a collection of all subspaces spanned by at most $m$ elements of $\D_N$ as a possible source of approximating (representing) elements. Thus, we would like to build a discretization scheme, which works well for all such subspaces. This kind of discretization falls in the category of {\it universal discretization}. The paper is devoted to the problem of universal sampling discretization.

Let  $\Omega$ be a nonempty set  equipped  with a  probability measure $\mu$.  For  $1\le p\leq  \infty$, let  $L_p(\Omega):=L_p(\Omega,\mu)$ denote  the real Lebesgue  space $L_p$
	defined with respect to the measure $\mu$ on $\Omega$, and  $\|\cdot\|_p$ the  norm of $L_p(\Og)$.
By discretization of the $L_p$ norm we understand a replacement of the measure $\mu$ by
a discrete measure $\mu_m$ with support on a set $\xi =\{\xi^j\}_{j=1}^m \subset \Omega$. This means that integration with respect to measure $\mu$ is replaced by an appropriate cubature formula. Thus, integration is replaced by evaluation of a function $f$ at a
finite set $\xi$ of points. This is why this way of discretization is called {\it sampling discretization}.
The problem of sampling discretization is a classical problem. The first results in this direction were obtained  in the 1930s by Bernstein, by Marcinkiewicz  and
by Marcinkiewicz-Zygmund  for discretization of the $L_p$ norms of the univariate trigonometric polynomials. Even though this problem is very important in applications, its systematic study has begun only  recently (see the survey paper \cite{DPTT}). We now give explicit formulations of the
sampling discretization problem (also known as the Marcinkiewicz discretization problem) and of the problem of universal discretization. 
   
{\bf The sampling discretization problem.} Let $(\Omega,\mu)$ be a probability space, and 
  $X_N\subset L_q$ an $N$-dimensional subspace of $L_q(\Omega,\mu)$ with  $1\le q \le \infty$  (the index $N$ here, usually, stands for the dimension of $X_N$).  We shall always assume that every function in $X_N$ is defined everywhere on $\Og$, and 
  \[ f\in X_N, \  \|f\|_q =0\implies f=0\in X_N.\]  We say that $X_N$  admits the Marcinkiewicz-type discretization theorem with parameters $m\in \N$ and $q$ and positive constants $C_1\le C_2$ if there exists a set $\xi := \{\xi^j\}_{j=1}^m \subset \Omega$
 such that for any $f\in X_N$ we have in the case $1\le q <\infty$
\be\label{A1}
C_1\|f\|_q^q \le \frac{1}{m} \sum_{j=1}^m |f(\xi^j)|^q \le C_2\|f\|_q^q
\ee
and in the case $q=\infty$ 
$$
C_1\|f\|_\infty \le \max_{1\le j\le m} |f(\xi^j)| \le  \|f\|_\infty.
$$

{\bf The  problem of universal discretization.} Let $\cX:= \{X(n)\}_{n=1}^k$ be a collection of finite-dimensional  linear subspaces $X(n)$ of the $L_q(\Omega)$, $1\le q \le \infty$. We say that a set $\xi:= \{\xi^j\}_{j=1}^m \subset \Omega $ provides {\it universal discretization} for the collection $\cX$ if, in the case $1\le q<\infty$, there are two positive constants $C_i$, $i=1,2$, such that for each $n\in\{1,\dots,k\}$ and any $f\in X(n)$ we have
\[
C_1\|f\|_q^q \le \frac{1}{m} \sum_{j=1}^m |f(\xi^j)|^q \le C_2\|f\|_q^q.
\]
In the case $q=\infty$  for each $n\in\{1,\dots,k\}$ and any $f\in X(n)$ we have
\be\label{1.2u}
C_1\|f\|_\infty \le \max_{1\le j\le m} |f(\xi^j)| \le  \|f\|_\infty.
\ee 

Note that the  problem of universal discretization for the collection $\cX:= \{X(n)\}_{n=1}^k$ is 
the sampling discretization problem for the set $\cup_{n=1}^k X(n)$. Also, we point out that the concept of universality is well known in approximation theory. For instance, the reader can find a discussion of universal cubature formulas in \cite{VTbookMA}, Section 6.8. 

The problem of universal discretization for some special subspaces of the
trigonometric polynomials was studied in \cite{VT160, DPTT}. To describe the results in \cite{VT160, DPTT}, we need to introduce some necessary notations.  First, given   a finite subset $Q$ of $\Z^d$, we set 
$$
\Tr(Q):= \Bigl\{f: f=\sum_{\bk\in Q}c_\bk e^{i(\bk,\bx)},\   \  c_{\mathbf{k}}\in \mathbb{C},\  \  \bk\in Q\Bigr\}.
$$
For $\bs=(s_1, \cdots, s_d)\in\Z^d_+$, we 
define
$$
R(\bs) := \{\bk \in \Z^d :   |k_j| < 2^{s_j}, \quad j=1,\dots,d\}.
$$
The following result, proved in \cite{VT160},  solves the universal discretization problem 
for    the collection 
$$
\cC(n,d):= \left\{\Tr(R(\bs)): s_1+\cdots+s_d=n\right\}
$$
 of subspaces of 
trigonometric polynomials. 
\begin{Theorem}\textnormal{\cite{VT160}}\label{udT1}   For every $1\le q\le\infty$ there exists a large enough constant $C(d,q)$, which depends only on $d$ and $q$, such that for any $n\in \N$ there is a set $\xi:=\{\xi^\nu\}_{\nu=1}^m\subset \T^d$, with $m\le C(d,q)2^n$ that provides universal discretization in $L_q$   for the collection $\cC(n,d)$.
\end{Theorem}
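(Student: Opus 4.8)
The plan is to prove Theorem~\ref{udT1} by reducing the universal discretization problem for the collection $\cC(n,d)$ to an ordinary Marcinkiewicz-type discretization problem for a single, cleverly chosen subspace. The key observation is that although the collection $\cC(n,d)$ consists of roughly $\binom{n+d-1}{d-1}$ distinct subspaces $\Tr(R(\bs))$, each individual subspace $\Tr(R(\bs))$ has dimension $\prod_{j=1}^d(2^{s_j+1}-1)\asymp 2^{n}$, and all of them sit inside the single ambient space $\Tr(Q_n)$, where $Q_n:=\bigcup_{s_1+\cdots+s_d=n}R(\bs)$ is the Smolyak-type hyperbolic cross. One checks easily that $|Q_n|\asymp 2^n n^{d-1}$, so $\Tr(Q_n)$ is too big by a logarithmic factor to give the bound $m\le C(d,q)2^n$ directly; the whole point is that we do \emph{not} want discretization for all of $\Tr(Q_n)$, only simultaneously for each of its coordinate subspaces $\Tr(R(\bs))$.

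First I would treat the case $q=\infty$ and the ``upper'' inequality in \eqref{1.2u} separately, since these are trivial: for any choice of points $\max_\nu|f(\xi^\nu)|\le\|f\|_\infty$, and the $q=\infty$ lower bound as well as the case $2<q<\infty$ can be derived from the case $q=2$ by a standard comparison of norms on a fixed finite-dimensional space together with a Nikol'skii-type inequality (the trigonometric polynomials in $\Tr(R(\bs))$ satisfy $\|f\|_\infty\le C 2^{n/2}\|f\|_2$, uniformly in $\bs$). So the heart of the matter is the $L_2$ case, which should be handled probabilistically. The plan is to draw $\xi^1,\dots,\xi^m$ independently and uniformly at random from $\T^d$ and show that, with positive probability, the inequality
\[
\tfrac12\|f\|_2^2\le\frac1m\sum_{\nu=1}^m|f(\xi^\nu)|^2\le\tfrac32\|f\|_2^2
\]
holds simultaneously for every $f$ in every subspace $\Tr(R(\bs))$ with $s_1+\cdots+s_d=n$. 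For a single fixed subspace $\Tr(R(\bs))$ this is the classical random Marcinkiewicz estimate proved via a matrix Chernoff / Bernstein inequality (e.g.\ the operator-norm concentration of $\frac1m\sum_\nu\phi(\xi^\nu)\phi(\xi^\nu)^*$ around the identity), which succeeds once $m\gtrsim 2^n\,\log(2^n)$ — that is, $m\gtrsim 2^n n$ up to constants depending on $d,q$; this already matches the asserted bound $m\le C(d,q)2^n$ only up to the extra factor $n$, so one must be slightly more careful, but in fact for the trigonometric system the relevant quantity is governed by $\max_\bx\sum_{\bk\in R(\bs)}|e^{i(\bk,\bx)}|^2=|R(\bs)|\asymp 2^n$ and one gets away with $m\asymp 2^n$ for the fixed subspace. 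Then I would take a union bound over the at most $(n+1)^{d-1}$ choices of $\bs$: since the failure probability for each fixed $\bs$ is exponentially small in $m/2^n$, and the number of subspaces is only polynomial in $n$, choosing $m=C(d,q)2^n$ with $C(d,q)$ large enough kills the union bound and leaves a set of points that works universally.

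The main obstacle, and the place where care is genuinely needed, is controlling the concentration for a \emph{fixed} subspace $\Tr(R(\bs))$ sharply enough that the final $m$ is only $O(2^n)$ rather than $O(2^n\log 2^n)$ — the naive matrix Bernstein bound costs a logarithm in the dimension. One route around this is to use the fact that the Dirichlet-type kernel structure of $R(\bs)$ (a product of intervals) makes the system $\{e^{i(\bk,\bx)}:\bk\in R(\bs)\}$ especially well-conditioned, so that a direct estimate of the fourth moment $\mathbb E\big|\frac1m\sum_\nu|f(\xi^\nu)|^2-\|f\|_2^2\big|^2$ uniformly over the unit sphere of $\Tr(R(\bs))$, combined with an $\varepsilon$-net argument on that sphere (whose covering number is $(3/\varepsilon)^{\dim}$, again only exponential in $2^n$ and hence absorbed), yields the estimate with $m\asymp 2^n$; the logarithmic loss in the crude version is exactly what the union bound over $\bs$ can then afford. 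Alternatively, one can simply accept $m\le C(d,q)2^n\log(2^n+1)\asymp 2^n n$ from the fixed-subspace step plus a trivial union bound and note that by rescaling $n$ this is not quite the stated bound — so the sharper fixed-subspace argument is what must be invoked. In the write-up I would isolate the fixed-subspace discretization with the sharp constant as a lemma (this is where the real analytic content lives), and then the universality is a one-line union bound over the polynomially many $\bs$.
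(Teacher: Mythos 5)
Theorem~\ref{udT1} is not proved in this paper; it is imported from \cite{VT160}, where the argument is \emph{deterministic}: the point set $\xi$ is taken to be a $(t,r,d)$-net in base $2$ (a Sobol'/Niederreiter-type digital net of $2^r\asymp 2^n$ points in which every dyadic box of volume $2^{t-r}$ contains exactly $2^t$ points). The anisotropic dyadic boxes with side lengths $2^{-s_j-\ell}$ are precisely the boxes such a net equidistributes, while Bernstein's inequality controls the oscillation of $f\in\Tr(R(\bs))$ on each of them; this yields the discretization simultaneously for all $\bs$ with $s_1+\cdots+s_d=n$ and all $1\le q\le\infty$, with $m\asymp 2^n$ and no logarithmic loss. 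Your proposal replaces this by i.i.d.\ uniform sampling plus a union bound, and that route cannot reach the stated bound.

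The step that fails is exactly the one you flag as the ``main obstacle'': removing the factor $\log(2^n)$ for a \emph{single} subspace $\Tr(R(\bs))$ sampled at i.i.d.\ uniform points. This is not a deficiency of matrix Bernstein that a fourth-moment or $\varepsilon$-net refinement can repair --- the logarithmic oversampling is genuinely necessary for random points. Indeed, take $\bs=(n,0,\dots,0)$ and $m\asymp 2^n$ i.i.d.\ points; with high probability the first coordinates $\xi^\nu_1$ leave a gap of length $\asymp (\log m)/m\asymp n2^{-n}$ on the circle. For the normalized Dirichlet kernel $f(\bx)=2^{-n/2}\sum_{|k|<2^{n-1}}e^{ikx_1}$ centered at the middle $x^*$ of that gap one has $\|f\|_2^2\asymp 1$, $|f(\bx)|\lesssim 2^{-n/2}|x_1-x^*|^{-1}$, and a dyadic summation over the (roughly uniformly spread) sample points gives $\frac1m\sum_\nu|f(\xi^\nu)|^2\lesssim 2^{-n}\,(n2^{-n})^{-1}=1/n\to 0$, so the lower discretization inequality fails. (The known way to get $m\asymp N$ points for a single $N$-dimensional subspace is Batson--Spielman--Srivastava sparsification, which produces subspace-dependent weighted points and hence does not combine with a union bound into a universal set.) A second, independent gap is the reduction of $q\ne 2$ to $q=2$: passing through the Nikol'skii inequality $\|f\|_\infty\le C2^{n/2}\|f\|_2$ only yields the lower bound in \eqref{1.2u} with a constant of order $2^{-n/2}$, not a constant independent of $n$, and the range $1\le q<2$ is not addressed at all. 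As you yourself note, the honest output of your scheme is $m\ll 2^n n$, which is strictly weaker than Theorem~\ref{udT1}.
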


Second, for $n\in \N$, let
 $$\Pi_n :=[-2^{n-1}+1, 2^{n-1} -1]^d\cap \Z^d.$$
 For a positive integer $v\le |\Pi_n|$, define 
$$
\cS(v,n):= \left\{Q:  \  \  Q\subset \Pi_n, \   |Q|=v\right\}.
$$
Then it is easily  seen  that
\[
|\cS(v,n)| =\binom{|\Pi_n|}{v}<2^{dnv}.
\]
The following two theorems provide   universal discretization of $L_1$ and $L_2$ norms for the collection 
$\{\Tr(Q):\  \  Q\in \cS(v,n)\}$. 
 
\begin{Theorem}\textnormal{\cite[Theorem 7.4]{VT159,DPTT}}\label{udT4} For positive integers $n$ and $1\leq v\leq |\Pi_n|$, let 
	\[ M_p(n,v) :=\begin{cases}
	v^2 n^{9/2}, \   \ &\text{if $p=1$},\\
	v^2n,\   \ &\text{if $p=2$}.
	\end{cases}\]
	Then there exist three positive constants $C_i(d)$, $i=1,2,3$,
	such that for any $n,v\in\N$ with  $v\le |\Pi_n|$, and for  $p=1$ and $2$,  there is a set $\xi =\{\xi^\nu\}_{\nu=1}^m \subset \T^d$, with $m\le C_1(d) M_p(n,v)$ such that for any $f\in \cup_{Q\in \cS(v,n)} \Tr(Q)$
	$$
	C_2(d)\|f\|_p^p \le \frac{1}{m} \sum_{\nu=1}^m |f(\xi^\nu)|^p \le C_3(d)\|f\|_p^p.
	$$
\end{Theorem}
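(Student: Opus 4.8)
The plan is to take the sampling points $\xi^1,\dots,\xi^m$ at random, independent and each uniform on $\T^d$, and to verify the required inequality for \emph{all} $Q\in\cS(v,n)$ at once by a union bound over the $|\cS(v,n)|<2^{dnv}$ subspaces; the price of that union bound is exactly what makes the final bound quadratic in $v$.

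\emph{The case $p=2$.} Fix $Q$ and set $\Phi(x):=\bigl(e^{i(\bk,x)}\bigr)_{\bk\in Q}\in\bbC^{v}$. Since $\{e^{i(\bk,\cdot)}\}_{\bk\in Q}$ is orthonormal in $L_2(\T^d)$ we have $\bE\,\Phi(\xi)\Phi(\xi)^{*}=I_v$, while $\|\Phi(\xi)\Phi(\xi)^{*}\|=\|\Phi(\xi)\|_2^2=v$ at every point. The matrix Chernoff inequality then gives
\[
\bP\Bigl(\bigl\|\tfrac1m\sum_{\nu=1}^{m}\Phi(\xi^\nu)\Phi(\xi^\nu)^{*}-I_v\bigr\|\ge\tfrac12\Bigr)\le 2v\,e^{-c_0m/v},
\]
and off this event $\frac1m\sum_\nu|f(\xi^\nu)|^2\in[\tfrac12,\tfrac32]\|f\|_2^2$ for every $f\in\Tr(Q)$. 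A union bound over the $\le 2^{dnv}$ sets $Q$ makes the total failure probability $\le 2^{dnv}\cdot2v\,e^{-c_0m/v}$, which is $<1$ as soon as $c_0m/v\ge (\log 2)\,dnv+\log(2v)$; since $v\le|\Pi_n|<2^{dn}$, this holds for $m=C_1(d)\,v^2n$, so a good point set exists and the case $p=2$ is done.

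\emph{The case $p=1$.} Here one cannot just invoke the $L_2$ result, because the only Nikolskii inequality available for $v$-sparse polynomials, $\|f\|_2\le\sqrt v\,\|f\|_1$, costs a factor $\sqrt v$. Instead I would discretize the functional $f\mapsto\frac1m\sum_\nu|f(\xi^\nu)|$ on a finite net of each sphere $S_Q:=\{f\in\Tr(Q):\|f\|_1=1\}$. From $|\hat f(\bk)|\le\|f\|_1$ one gets, for $f\in S_Q$, both $\|f\|_\infty\le v$ and $\|f\|_2^2\le\|f\|_\infty\|f\|_1\le v$, so $S_Q$ lies in a ball of radius $v$ in the $2v$-real-dimensional space $\Tr(Q)$ and has a $\tfrac14$-net $\cN_Q\subset S_Q$ in $\|\cdot\|_\infty$ with $|\cN_Q|\le (c_1v)^{2v}$. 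For fixed $g\in\cN_Q$ the variables $|g(\xi^\nu)|$ are i.i.d., bounded by $v$, of variance $\le\|g\|_2^2\le v$ and mean $1$, so Bernstein's inequality gives $\bP\bigl(\bigl|\frac1m\sum_\nu|g(\xi^\nu)|-1\bigr|>\tfrac18\bigr)\le 2e^{-c_2m/v}$. Union bounds over $g\in\cN_Q$ and over $Q$ make the failure probability $\le 2^{dnv}(c_1v)^{2v}\cdot2e^{-c_2m/v}<1$ once $c_2m/v\gtrsim dnv+v\log v$, i.e. again for $m=C_1(d)\,v^2n$. On the good event every $f\in S_Q$ is $\|\cdot\|_\infty$-within $\tfrac14$ of some $g\in\cN_Q$, whence $\bigl|\frac1m\sum_\nu|f(\xi^\nu)|-\frac1m\sum_\nu|g(\xi^\nu)|\bigr|\le\|f-g\|_\infty\le\tfrac14$ and $\frac1m\sum_\nu|f(\xi^\nu)|\in[\tfrac58,\tfrac{11}{8}]\|f\|_1$; homogeneity extends this to all of $\bigcup_{Q\in\cS(v,n)}\Tr(Q)$.

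\emph{The main obstacle.} The subtle point is the $p=1$ bound: a unit-$L_1$-norm $v$-sparse polynomial can have $\|f\|_\infty\asymp v$, so the range and the variance in the Bernstein step are both $\asymp v$, and the resulting exponent $e^{-cm/v}$, set against $2^{dnv}$ subspaces, forces the factor $v^2$. The scheme sketched above actually delivers $m\ll v^2n$ for \emph{both} $p$, so the weaker exponent $n^{9/2}$ in the stated $p=1$ bound is just an artifact of a less efficient derivation (via general Marcinkiewicz-type theorems for individual subspaces together with entropy-number estimates for $v$-sparse trigonometric polynomials), and either route proves the theorem. What no such union bound over all of $\cS(v,n)$ can do is replace $v^2$ by the near-linear $v(\log N)^2(\log v)^2$ of the present paper; that is precisely what the conditional sampling-discretization theorem and the entropy/greedy-approximation inequality announced in the abstract are designed to bypass.
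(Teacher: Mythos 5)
Your argument is correct, and the theorem as stated follows from it (for $p=1$ you in fact obtain the stronger bound $m\ll v^2n$ in place of $v^2n^{9/2}$). Note that the paper itself does not prove Theorem \ref{udT4}: it quotes it from \cite{VT159,DPTT} and only remarks that the cited proof rests on random matrix results for $p=2$ and on a chaining technique for $p=1$. Your $p=2$ step --- matrix Chernoff for the empirical Gram matrix of the orthonormal system $\{e^{i(\bk,\cdot)}\}_{\bk\in Q}$, with $\|\Phi(\xi)\Phi(\xi)^*\|=v$ and $\bE\,\Phi(\xi)\Phi(\xi)^*=I_v$, followed by a union bound over the fewer than $2^{dnv}$ frequency sets --- is essentially the cited route. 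Your $p=1$ step is genuinely different: in place of chaining you use a single-scale $\tfrac14$-net in $L_\infty$ of the $L_1$-sphere of each $\Tr(Q)$ together with scalar Bernstein, and all the ingredients check out ($\|f\|_\infty\le v\|f\|_1$ and $\|f\|_2^2\le v\|f\|_1^2$ from $|\hat f(\bk)|\le\|f\|_1$, the volumetric bound $(cv)^{2v}$ for the net, Bernstein with range and variance of order $v$, the $1$-Lipschitz transfer from the net to all of $S_Q$, and homogeneity). The reason this cruder device suffices here is that the net contributes only $\asymp v\log v\le dnv\log 2$ to the logarithm of the union bound, which is already dominated by the $dnv$ coming from the number of subspaces; for a \emph{single} subspace the one-scale net would force $m\gtrsim v^2\log v$, much worse than the nearly linear chaining bound of \cite{VT159}, but at the level of the universal statement that loss is invisible, and the extra power of $n$ in $M_1(n,v)$ disappears. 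What each approach buys: the chaining route gives sharp single-subspace results and was simply transplanted into the universal setting in \cite{VT159,DPTT}, whereas your direct union-bound argument is more elementary and tighter for the universal problem but is intrinsically stuck at $m\asymp v^2n$, since any union bound over all of $\cS(v,n)$ costs a factor $\asymp v$ in the exponent --- exactly the barrier that Theorem \ref{IT3} is designed to circumvent.
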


Let us denote by  $\D_N=\{g_i\}_{i=1}^N$  a system of functions from $L_p$. Denote the set of all $v$-term approximants with respect to $\D_N$ as
$$
\Sigma_v(\D_N):= \left\{f\,:\, f = \sum_{i\in G} c_ig_i,\quad\text{with any}\, G\subset [1,N],\quad |G|=v\right\}.
$$

Theorem \ref{udT4} provides universal discretization for the collection $\{\Tr(Q)\,:\, Q\in \cS(v,n)\}$, which is equivalent to the sampling discretization of the $L_p$ norm of elements from the set $\Sigma_v(\D_N)$ with $N=|\Pi_n|$, $\D_N =\{e^{i(\bk,\bx)}\}_{\bk\in \Pi_n}$. The proof of 
Theorem \ref{udT4} in the  case $p=2$ is based on deep results on random matrices and in the case 
$p=1$ is based on a chaining technique. We point out that in both the cases $p=2$ and $p=1$ Theorem \ref{udT4} provides universal discretization with the number of points growing as $v^2$. 
In this paper we prove the following estimate (see below for definitions and notations).
 
 \begin{Theorem}\label{IT3} Let $1\le p\le 2$. Assume that $\Phi_N$ is a uniformly bounded Riesz basis of $X_N:=\mspan(\Phi_N)$ satisfying \eqref{Riesz} for some constants $0<R_1\leq R_2$. Then for a large enough constant $C=C(p,R_1,R_2)$, and any integer $1\leq v\leq N$,  there exist 
$m$ points
$\xi^1,\cdots, \xi^m \in  \Omega$ with
$$
m \le Cv(\log N)^2(\log(2v))^2
$$
  such that for any $f\in  \Sigma_v(\Phi_N)$
we have
\begin{equation}\label{1.4u}
\frac{1}{2}\|f\|_p^p \le \frac{1}{m}\sum_{j=1}^m |f(\xi^j)|^p \le \frac{3}{2}\|f\|_p^p.
\end{equation}
\end{Theorem}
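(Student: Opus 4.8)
The plan is to derive Theorem~\ref{IT3} from two ingredients established in the paper: a \emph{conditional} universal sampling discretization theorem, in which the number $m$ of points needed for a collection of low-dimensional subspaces is controlled by a uniform (Nikol'skii-type) bound on the functions together with upper estimates for the entropy numbers (in $L_p$) of the set to be discretized; and an inequality bounding those entropy numbers in terms of greedy (best $m$-term) approximation with respect to the dictionary $\Phi_N$. Thus the argument splits into a Nikol'skii step, an entropy step, and an optimization step.

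For the Nikol'skii step, write $\Phi_N=\{\varphi_i\}_{i=1}^N$ with $\|\varphi_i\|_\infty\le B$. Using the Riesz basis inequality \eqref{Riesz}, for $f=\sum_{i\in G}c_i\varphi_i$ with $|G|\le v$ one has $\|c\|_2\le R_1^{-1}\|f\|_2$, hence $\|f\|_\infty\le B\|c\|_1\le B\sqrt v\,\|c\|_2\le BR_1^{-1}\sqrt v\,\|f\|_2$; interpolating via $\|f\|_2^2\le\|f\|_\infty^{2-p}\|f\|_p^p$ gives, for every $f\in\Sigma_v(\Phi_N)$,
\[
\|f\|_2\le C_1 v^{1/p-1/2}\|f\|_p,\qquad \|f\|_\infty\le C_2 v^{1/p}\|f\|_p,\qquad \|c\|_1\le C_3 v^{1/p}\|f\|_p .
\]
In particular the set $\CW:=\{f\in\Sigma_v(\Phi_N):\|f\|_p\le1\}$ is uniformly bounded by $C_2 v^{1/p}$ and satisfies $\CW\subseteq C_3 v^{1/p}A_1(\Phi_N)$, where $A_1(\Phi_N):=\{\sum_i c_i\varphi_i:\sum_i|c_i|\le1\}$.

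For the entropy step, estimate $\varepsilon_k(\CW)$ (in $L_p$, for which it suffices to work in $L_2$ since $\|g\|_p\le\|g\|_2$) in two regimes. At moderate scales, use $\CW\subseteq C_3 v^{1/p}A_1(\Phi_N)$ and the entropy-versus-greedy inequality: elements of $A_1(\Phi_N)$ admit best $m$-term approximation in $L_2$ at the Maurey rate $\lesssim m^{-1/2}$ and $\Phi_N$ has only $N$ members, which yields $\varepsilon_k(\CW)\lesssim v^{1/p}\big(k^{-1}\log(2N)\big)^{1/2}$. At fine scales, use that $\CW$ is a union of at most $\binom Nv\le(eN/v)^v$ sets, each lying in a subspace of dimension $\le v$ and in an $L_2$-ball of radius $\lesssim v^{1/p-1/2}$, giving exponential decay $\varepsilon_k(\CW)\lesssim v^{1/p-1/2}\,2^{-c(k-v\log(2eN/v))/v}$ once $k\gtrsim v\log(2eN/v)$. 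Taking, at each scale, the better of the two estimates is the crucial point.

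Finally, feed the uniform bound $\|f\|_\infty\lesssim v^{1/p}$ on $\CW$ and the two-regime entropy estimates into the conditional universal discretization theorem; choosing the crossover scale (of order $v\log(2N)\log(2v)$, where the greedy-type and dimension-type bounds balance) and carrying out the optimization inside that theorem produces a set of $m\le Cv(\log N)^2(\log(2v))^2$ points for which \eqref{1.4u} holds on $\CW$, hence by homogeneity for all $f\in\Sigma_v(\Phi_N)$. I expect the entropy step together with this final optimization to be the main obstacle: feeding only the sup-norm bound $\|f\|_\infty\lesssim v^{1/p}$ and the dimension-and-cardinality entropy estimate into the conditional theorem reproduces the earlier $v^2$-type bound of Theorem~\ref{udT4}, so the improvement comes entirely from exploiting the Maurey/convex-hull (greedy) entropy decay at moderate scales and balancing it against the exponential decay at fine scales; extracting the exact powers of $\log N$ and $\log(2v)$ from this balance is the delicate part.
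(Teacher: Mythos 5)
Your skeleton (a conditional discretization theorem fed with Maurey/greedy entropy bounds and a Nikol'skii inequality) is the right one, and your Nikol'skii step is correct, but there are two genuine gaps, both located exactly where you predicted the difficulty would be.

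First, the conditional theorem (Theorem \ref{XT2}, or its refined form in Section \ref{sec:5}) requires entropy estimates of $\Sigma_v^p(\Phi_N)$ in the \emph{uniform} norm $L_\infty$, not in $L_p$ or $L_2$: its proof is a chaining argument in which the nets are $L_\infty$-nets and the level sets are defined pointwise, so an $L_2$-net gives no control. Your moderate-scale bound $\e_k\lesssim v^{1/p}\bigl(k^{-1}\log (2N)\bigr)^{1/2}$ lives in $L_2$; upgrading it to $L_\infty$ via your own Nikol'skii inequality costs a factor $v^{1/p}$ and returns you to the quadratic bound. The paper's way around this is the hypothesis \eqref{X5a}, $\|f\|_\infty\le C_0\|f\|_{\log N}$, which converts an $L_{\log N}$ entropy bound (obtained from Theorem \ref{XT4} applied in the uniformly smooth space $L_{\log N}$, at the price of one extra factor $\sqrt{\log N}$) into an $L_\infty$ bound. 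Since \eqref{X5a} does not hold on a general $\Omega$, the proof first discretizes the \emph{whole} space $X_N$ with $m_1\asymp N^2\log N$ points using Lemma \ref{lem-4-1}, and then works on the discrete domain $\Omega_{m_1}$, where \eqref{X5a} holds automatically because $\log m_1\sim\log N$; the final $m$ points are then extracted from $\Omega_{m_1}$, and the two discretizations are composed. This two-step reduction is the main idea missing from your proposal.

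Second, your claim that for the entropy step ``it suffices to work in $L_2$ since $\|g\|_p\le\|g\|_2$'' does not suffice for $1\le p<2$. The conditional theorem needs the decay rate $\e_k\lesssim B_1(v/k)^{1/p}$; the inclusion $\Sigma_v^p(\Phi_N)\subset Cv^{1/p-1/2}\Sigma_v^2(\Phi_N)$ only yields $\e_k\lesssim v^{1/p}k^{-1/2}$, and feeding that rate into the conditional theorem reproduces $m\gtrsim v^{2/p}$ up to logarithms, i.e.\ the old quadratic bound at $p=1$. The paper obtains the correct rate $(\log N)^{2/p}(v/k)^{1/p}$ by a separate entropy-interpolation inequality (Lemma \ref{YL1}), which bounds $\cH_\va(\Sigma_v^p(\Phi_N);L_\infty)$ by a sum of entropies of $\Sigma_{2v}^2(\Phi_N)$ at the rescaled levels $\sim\va^{p/2}$. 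Your fine-scale union-of-subspaces estimate is fine but is not where the improvement comes from; without the $L_\infty$ upgrade and Lemma \ref{YL1} the argument does not close.
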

 
In particular, Theorem \ref{IT3} gives the order of bound 
$m\ll v(\log N)^2(\log (2v))^2$, which  is linear 
in $v$ with  extra logarithmic terms in $N$ and $v$.  This bound  is much better than previously known bounds (see Theorem \ref{udT4}), which provided quadratic in $v$ bounds. Note  that even for 
each individual subspace from $\Sigma_v(\Phi_N)$ we have the lower bound $m\ge v$ for the 
sampling discretization.

The rest of this paper is organized as follows.  Section \ref{X} and Section \ref{Y} are  devoted to estimating the  entropy numbers 
$\e_k(\Sigma_v^p(\Phi_N),L_\infty)$ 
of  the sets
$$
\Sigma_v^p(\Phi_N):= \{f\in \Sigma_v(\Phi_N)\,:\, \|f\|_p\le 1\},\  \ 1\leq p\leq 2,
$$
in the $L_\infty$-norm, where $\Phi_N$  is a  uniformly bounded Riesz basis   of
 $X_N:=[\Phi_N]\subset L_2$ and  $1\leq v\leq N$ is an integer. 
Such estimates  play an important role in the proof of Theorem \ref{IT3}.  To be more precise, in Section \ref{X}, we prove  under the  additional condition  \eqref{X5a} on the space $X_N=\mspan(\Phi_N)$ that for $p=2$, 
\be
\e_k(\Sigma_v^2(\Phi_N),L_\infty) \le C (\log N) \Bigl(\frac vk\Bigr)^{1/2},\quad   k=1,2,\dots.\label{1-4b}
\ee
The proof of \eqref{1-4b}  uses a  known result  from  Greedy approximation in smooth  Banach spaces and its connection with entropy numbers.  
In Section \ref{Y},  we show how the estimate \eqref{1-4b}  can be extended to the case $1\le p<2$ under the condition  \eqref{X5a}. 
  This  extension step  is based on a general inequality for 
 the entropy, which is  given in Lemma \ref{YL1} and  apprears to be of independent interest.
 In Section \ref{Z},  we prove Theorem \ref{IT3}, using the  estimates on entropy numbers established in the previous two sections, and  a conditional theorem on sampling discretization.  A main step  in the proof is to show that  the condition  \eqref{X5a} that   is assumed in our estimates of entropy numbers  can be dropped in sampling discretization.  The conditional Theorem \ref{XT2} used in the proof of Theorem \ref{IT3} is given in Section \ref{X} without proof.  
 In  Section \ref{sec:5},  we prove  a  refined   conditional theorem for  sampling  discretization of  all integral norms $L_q$ of functions from a    subset  $\CW\subset L_\infty$ satisfying certain conditions, which allows us to estimate  the number of  points  required for  the sampling discretization in terms of  an integral of the $\va$-entropy  $\mathcal{H}_\va (\CW, L_\infty)$, $\va>0$.   This  is an extension of the conditional result  proved in \cite{VT159,DPSTT2} for the unit ball of the space $X_N\subset L_p$.   In particular, it also  allows us to prove  a refined version of   Theorem \ref{IT3},  where  the constants $\frac 12$ and $\frac32$ in \eqref{1.4u} are  replaced by $1-\va$ and $1+\va$ respectively for an arbitrarily given $\va\in (0, 1)$.  Finally, in Section \ref{sec:6}, we give  a few  remarks on universal sampling discretization of $L_p$ norms for  $p>2$.


Throughout this paper, the letter $C$ denotes  a general positive
constant depending only on the parameters indicated as arguments or subscripts, and we will use the notation $|A|$ to denote the cardinality of a finite set $A$.

\section{Some general entropy bounds and the case $p=2$ }
\label{X}

It is well known that bounds of the entropy numbers of the unit ball of an $N$-dimensional subspace $X_N\subset L_p$ 
$$
X_N^p := \{f\in X_N\,:\, \|f\|_p \le 1\}
$$
play an  important role in sampling discretization of the $L_p$ norm of elements of $X_N$ (see \cite{VT158}, \cite{VT159}, \cite{DPSTT1}, and \cite{DPSTT2}). 

Recall the definition of entropy numbers in Banach spaces. 
Let $X$ be a Banach space and  $B_X(g,r)$ denote  the   closed ball $\{f\in X:\|f-g\|\le r\}$ with  center $g\in X$ and radius $r>0$.  Given a positive number $\e$,   the covering number $N_\e(A, X)$ of a compact set $A\subset X$ is defined  as
\[ N_\va (A,X):=\min\left\{ n\in\N:\  \ \exists\; g^1,\ldots, g^n\in A, \   A\subset \bigcup_{j=1}^n B_X(g^j, \va)\right\}.\]
    We denote by $\cN_\e(A,X)$
the corresponding minimal $\e$-net of the set $A$ in $X$; namely, $\cN_\va(A,X)$ is a finite subset of $A$ such that $A\subset \bigcup_{y\in \cN_\va(A,X)}B_X(y,\va)$ and  $N_\e(A,X)= |\cN_\e(A,X)|$.
The $\va$-entropy $\mathcal{H}_\va (A, X)$ of the compact set $A$  in $X$  is  defined as
$\log_2 N_\e(A,X)$, and    the entropy numbers $\e_k(A,X)$ of the set $A$ in $X$ are defined  as
\begin{align*}
\e_k(A,X)  :&=\inf \{\e>0: \cH_\e (A, X)\leq k\},\   \  k=1,2,\ldots.
\end{align*}

The  following  conditional result was proved in  \cite{VT159} for $p=1$,  and in \cite{DPSTT2} for the full range of  $1\leq p<\infty$. 

\begin{Theorem}\textnormal{\cite{VT159,DPSTT2}}\label{XT1} Let $1\le q<\infty$. Suppose that a subspace {$X_N\subset L_q(\Omega,\mu)$} satisfies the condition
\be\label{I6}
\e_k(X^q_N,L_\infty) \le  B (N/k)^{1/q}, \quad 1\leq k\le N,
\ee
where $B\ge 1$.
Then for a large enough constant $C(q)$ there exist
$m$ points
$\xi^1,\cdots, \xi^m \in  \Omega$ with
$$
m \le C(q)NB^{q}(\log_2(2BN))^2
$$
  such that for any $f\in X_N$
we have
$$
\frac{1}{2}\|f\|_q^q \le \frac{1}{m}\sum_{j=1}^m |f(\xi^j)|^q \le \frac{3}{2}\|f\|_q^q.
$$
\end{Theorem}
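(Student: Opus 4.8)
The plan is to produce the set $\xi$ by a probabilistic construction: draw $\xi^1,\dots,\xi^m$ independently with distribution $\mu$, and show that for $m$ as in the statement the resulting set satisfies the two inequalities for every $f\in X_N$ with positive probability. By homogeneity it suffices to control $f$ on the unit sphere $V:=\{f\in X_N:\ \|f\|_q=1\}$, i.e.\ to arrange that $\Delta(\xi):=\sup_{f\in V}\bigl|\frac1m\sum_{j=1}^m|f(\xi^j)|^q-1\bigr|\le\frac12$. Hypothesis \eqref{I6} with $k=1$ gives the Nikol'skii-type inequality $\|f\|_\infty\le 2BN^{1/q}\|f\|_q$ on $X_N$; put $M:=2BN^{1/q}$, so that every $f\in V$ has $\|f\|_\infty\le M$, $\int_\Omega|f|^q\,d\mu=1$, and $\int_\Omega\bigl|\,|f|^q-|g|^q\,\bigr|\,d\mu\le2$ for all $f,g\in V$.

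\emph{Dyadic chaining in $L_\infty$.} Fix scales $\va_l:=4M2^{-l}$, $l=0,1,\dots,L$. Using \eqref{I6}, choose for each $l$ a minimal $\va_l$-net $\N_l\subset V$ of $V$ in $L_\infty$; since $\cH_{\va_l}(V,L_\infty)\le\cH_{\va_l/2}(X^q_N,L_\infty)\le N(2B/\va_l)^q$, the choice of $M$ gives $\log_2|\N_l|\le 2^{lq}$, and $|\N_0|=1$ (the sphere has $L_\infty$-diameter $\le2M\le\va_0$). For $f\in V$ let $f_l\in\N_l$ be a nearest point; then, pointwise and after integration, $|f|^q=|f_0|^q+\sum_{l=1}^{L}h_l+r_L$ with $h_l:=|f_l|^q-|f_{l-1}|^q$ and $r_L:=|f|^q-|f_L|^q$. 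The two facts driving the estimate are that each increment has \emph{mean zero}, $\int_\Omega h_l\,d\mu=\|f_l\|_q^q-\|f_{l-1}\|_q^q=0$, and \emph{bounded $L_1$-norm}, $\int_\Omega|h_l|\,d\mu\le2$; together with $\|h_l\|_\infty\le 3q(2M)^{q-1}\va_l=:K_l$ they yield the variance bound $\int_\Omega h_l^2\,d\mu\le K_l\int_\Omega|h_l|\,d\mu\le 2K_l$ (other variance bounds, e.g.\ $\int_\Omega h_l^2\,d\mu\le C_q\va_l^2$ via Hölder when $q\le2$, are available and are needed in the balancing for $q>1$).

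\emph{Concentration and union bound.} For each $l$ and each of the at most $|\N_l||\N_{l-1}|\le 2^{2^{lq}+1}$ admissible pairs $(f_l,f_{l-1})$, Bernstein's inequality applied to the mean-zero variables $h_l(\xi^j)$ (bounded by $K_l$, variance $\le 2K_l$) shows that for $t_l$ chosen so that $mt_l^2\asymp K_l(2^{lq}+l)$ the event ``$\bigl|\frac1m\sum_jh_l(\xi^j)\bigr|>t_l$ for some admissible pair'' has probability at most $2^{-l-3}$; the single level-$0$ function $|f_0|^q$ (integral $1$, sup-norm $\le M^q$, variance $\le M^q$) is controlled to accuracy $\frac18$ by a plain Bernstein bound once $m\gtrsim M^q=(2B)^qN$. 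On the complement of all these events, $\Delta(\xi)\le\frac18+\sum_{l=1}^Lt_l+\sup_{f\in V}\bigl(\bigl|\frac1m\sum_jr_L(\xi^j)\bigr|+\bigl|\int_\Omega r_L\,d\mu\bigr|\bigr)$, and the total failure probability is at most $\frac18+\sum_{l\ge1}2^{-l-3}=\frac14<1$.

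\emph{Balancing the scales --- the main obstacle.} Taking $\va_L$ to be a small constant (so $L\le C_q\log_2(2BN)$), the remainder needs no polynomially small scale: from $\bigl|\,|a|^q-|b|^q\,\bigr|\le q(|b|+|a-b|)^{q-1}|a-b|$, concavity of $x\mapsto x^{q-1}$ for $1\le q\le2$, and the fact that the chain already controls $\frac1m\sum_j|f_L(\xi^j)|$, one gets $\bigl|\frac1m\sum_jr_L(\xi^j)\bigr|+\bigl|\int_\Omega r_L\,d\mu\bigr|\le C_q\va_L\le\frac18$. The crux --- the step I expect to be the principal difficulty --- is to show $\sum_{l=1}^Lt_l\le\frac14$ already when $m\le C(q)NB^q(\log_2(2BN))^2$: since the nonlinear map $f\mapsto|f|^q$ cannot be resolved to high precision by an $L_\infty$-net of $X_N$, one must, scale by scale, play the entropy cost $2^{lq}$ against \emph{whichever} variance bound from the previous step is smaller and against the bounded-difference contribution $K_l2^{lq}/m$, and retain only the $\asymp\log_2(2BN)$ scales that matter; a naive choice instead produces a bound polynomial in $N$. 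In the model case $q=1$ this is immediate: there $K_l=3\va_l$, so $K_l(2^{lq}+l)\asymp M$ uniformly in $l$, hence $t_l\asymp\sqrt{M/m}$ and $\sum_{l=1}^Lt_l\asymp\log_2(2BN)\sqrt{M/m}$, and requiring this to be $\le\frac14$ forces $m\asymp M(\log_2(2BN))^2=2BN(\log_2(2BN))^2$; the analogous optimization for general $q$ (carried out in \cite{VT159,DPSTT2}) is the delicate part. Once $\sum_lt_l\le\frac14$ is secured, the good event has positive probability, and for any $\xi$ in it one has $\frac12\|f\|_q^q\le\frac1m\sum_j|f(\xi^j)|^q\le\frac32\|f\|_q^q$ for all $f\in X_N$, with $m\le C(q)NB^q(\log_2(2BN))^2$.
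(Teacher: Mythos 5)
Your overall strategy --- random points, an $L_\infty$-chaining, and a Bernstein-plus-union-bound at each scale --- is the right family of ideas, but the specific decomposition you chain on cannot deliver the stated bound for any $q>1$, and this is precisely the step you yourself flag as ``the principal difficulty'' and leave unresolved. Chaining on the increments $h_l=|f_l|^q-|f_{l-1}|^q$ forces the range constant $K_l\asymp qM^{q-1}\va_l$ (the derivative of $x\mapsto x^q$ evaluated where $|f|\asymp M$), while the entropy cost at scale $\va_l$ is $2^{lq}\asymp (M/\va_l)^q$. Hence $K_l\,2^{lq}\asymp M^{2q-1}\va_l^{1-q}$, which for $q>1$ grows as $\va_l$ decreases and reaches $\asymp M^{2q-1}$ at your final scale $\va_L\asymp 1$. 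The range term of Bernstein's inequality alone then forces $t_L\gtrsim K_L2^{Lq}/m$, hence $m\gtrsim M^{2q-1}=(2B)^{2q-1}N^{2-1/q}$, polynomially worse than $NB^q(\log_2(2BN))^2$ (for $q=2$ this is already $N^{3/2}$). The variance terms fare no better: optimizing between your two bounds $\sigma_l^2\le 2K_l\asymp M^{q-1}\va_l$ and $\sigma_l^2\le C_q\va_l^2$ gives $\sum_l\sqrt{\sigma_l^2\,2^{lq}/m}\asymp\sqrt{M^{2-(2-q)^2}/m}$, i.e.\ $m\gtrsim N^{4-q-2/q}$, again polynomially worse than $N$ for all $1<q<2$ (e.g.\ $N^{7/6}$ at $q=3/2$). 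So the ``delicate optimization'' you defer to the literature is not an optimization available inside your framework: with increments, these nets, and these variance/range bounds, no choice of the $t_l$ closes the argument. Only $q=1$, where $K_l\asymp\va_l$ and $K_l2^{l}\asymp M$ is level-independent, goes through as you wrote it.

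The proof in the paper (given in detail for the refined Theorem~\ref{thm-4-2}, whose argument is the one underlying Theorem~\ref{XT1}) avoids this mismatch by decomposing $|f|^q$ not into chaining increments but according to the level sets of $|f|$: one sets $U_j(f)=\{x:\,|A_j(f)(x)|\ge(1+a)^{j-1}\}$ using the net approximant $A_j(f)$ at scale $a(1+a)^j$, forms the partition $\{D_j(f)\}$, and replaces $|f|^q$ by the step function $h(f,\cdot)=\sum_j(1+a)^{j}\chi_{D_j(f)}$. The level-$j$ piece $(1+a)^{qj}\chi_{D_j(f)}$ has $L_\infty$-norm $M_j=(1+a)^{qj}$ and $L_1$-norm at most $2$, and is determined only by the approximants $A_k(f)$ with $k\ge j$, so the number of distinct such pieces is at most $\prod_{k\ge j}|\cA_k|$, with logarithm $\asymp NB^q(1+a)^{-jq}$. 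The product $M_j\cdot\log|\cF_j^q|\asymp NB^q$ is therefore uniform in $j$ --- exactly the cancellation your increments destroy --- and summing $\eta_j\asymp\sqrt{NB^q/m}$ over the $\asymp\log_2(2BN)$ levels yields $m\asymp NB^q(\log_2(2BN))^2$. If you replace your increment decomposition by this level-set decomposition, the concentration and union-bound machinery you have set up applies essentially verbatim.
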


As we explained above the problem of universal discretization of the collection $\{X(n)\}_{n=1}^k$ is equivalent to the sampling discretization of the union $\cup_{n=1}^k X(n)$ of the corresponding subsets. Therefore, instead of bounds of the entropy numbers of the unit ball 
$X_N^q$ we are interested in the entropy bounds of the "unit ball"
$$
\Sigma_v^q(\D_N):= \{f\in \Sigma_v(\D_N)\,:\, \|f\|_q\le 1\}
$$
which is the union of the corresponding unit balls. 

The following version of Theorem \ref{XT1} 
 follows directly from its proof. 
 
 \begin{Theorem}\label{XT2} Let $1\le q<\infty$ and $1\leq v\leq N$. Suppose that a dictionary $\D_N$ is such that the set $\Sigma_v^q(\D_N)$ satisfies the condition
\be\label{Z1}
\e_k(\Sigma_v^q(\D_N),L_\infty) \le  B_1 (v/k)^{1/q}, \quad 1\leq k <\infty,
\ee
where $B_1\ge 1$.	Assume in addition that there exists a constant $B_2\ge 1$ such that 
\begin{equation}\label{2-3b} 
\|f\|_\infty \leq B_2 v^{1/q} \|f\|_q,\   \   \forall f\in \Sigma_v^q(\D_N).
\end{equation}
Then for a large enough constant $C(q)$ there exist
$m$ points
$\xi^1,\cdots, \xi^m\in  \Omega$ with
$$
m \le C(q)B_1^{q} v(\log(2B_2v))^2
$$
  such that for any $f\in  \Sigma_v(\D_N)$
we have
$$
\frac{3}{4}\|f\|_q^q \le \frac{1}{m}\sum_{j=1}^m |f(\xi^j)|^q \le \frac{5}{4}\|f\|_q^q.
$$
\end{Theorem}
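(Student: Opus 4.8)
The plan is to derive Theorem \ref{XT2} from Theorem \ref{XT1} by a direct inspection of the proof of the latter, since the statement explicitly says the result "follows directly from its proof." Concretely, the argument underlying Theorem \ref{XT1} never uses the linearity of $X_N$ in full: what it actually uses is (a) an entropy bound of the form $\e_k(\cdot,L_\infty)\le B(v/k)^{1/q}$ on the relevant "unit ball" $\cW$, where $v$ plays the role that $N$ plays for a genuine subspace, and (b) a pointwise bound $\|f\|_\infty\le B_2 v^{1/q}\|f\|_q$ on $\cW$ (for a true $N$-dimensional subspace $X_N$ with a uniformly bounded basis this follows with $v=N$ from Nikolskii-type inequalities, and in the abstract setting it is simply assumed). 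Given these two inputs, one runs the probabilistic selection / chaining scheme of \cite{VT159,DPSTT2}: draw $m$ i.i.d. points from $\mu$, and use the entropy bound (together with a union bound over the $\va$-net at each scale of a dyadic chaining decomposition, controlled via Bernstein's inequality where the pointwise bound (b) supplies the needed sup-norm control on the increments) to show that with positive probability the empirical $L_q^q$ functional is within a multiplicative factor of the true norm uniformly over $\cW$. I would first state this as a lemma isolating exactly the two hypotheses \eqref{Z1} and \eqref{2-3b}, observe that $\Sigma_v^q(\D_N)$ is a compact symmetric set closed under the relevant scaling (so that controlling the norm on the unit ball controls it on all of $\Sigma_v(\D_N)$ by homogeneity), and then quote the proof of Theorem \ref{XT1} verbatim with $N$ replaced by $v$.

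The key steps, in order, would be: (1) reduce \eqref{1.4u}-type conclusions on $\Sigma_v(\D_N)$ to a uniform two-sided estimate on the compact set $\Sigma_v^q(\D_N)$, using positive homogeneity of $f\mapsto\|f\|_q^q$ and of $f\mapsto \frac1m\sum|f(\xi^j)|^q$; (2) set up the dyadic chaining over $\Sigma_v^q(\D_N)$ using the $\va$-nets whose cardinalities are governed by \eqref{Z1}, so that the number of scales is $O(\log(2B_2 v))$ (the geometric series of entropies telescopes, and the pointwise bound \eqref{2-3b} fixes the finest scale at which the remainder is negligible); (3) at each scale apply Bernstein's inequality to the i.i.d. sample, where the variance is controlled by $\|g\|_{2q}^{2q}\le \|g\|_\infty^q\|g\|_q^q$ and the sup-norm of the summands by $\|g\|_\infty^q\le B_2^q v\,\|g\|_q^q$ via \eqref{2-3b}; (4) take a union bound over all net points at all scales — the total log-cardinality is $O(B_1^q v)$ up to the chaining logarithms — and choose $m\asymp C(q)B_1^q v(\log(2B_2 v))^2$ so that the failure probability is below $1$; (5) conclude that a good point set $\xi$ exists, giving the $\frac34,\frac54$ two-sided bound. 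The slightly worse constants $\frac34,\frac54$ versus $\frac12,\frac32$ in Theorem \ref{XT1} are a harmless artifact of how tightly one tunes the chaining constants, and I would simply absorb them into the choice of the implied constant in $m$.

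The main obstacle is bookkeeping rather than a new idea: one must verify that every place in the proof of Theorem \ref{XT1} where "$\dim X_N = N$" or "$X_N$ is a subspace" is invoked can be replaced either by the cardinality parameter $v$ (in the entropy exponent) or by the explicit hypothesis \eqref{2-3b} (in the Nikolskii / sup-norm step), with no hidden use of linearity — for instance, in the original proof the bound $\|f\|_\infty\le \sqrt N\|f\|_2$ for $f\in X_N^2$ comes for free from a uniformly bounded orthonormal-type basis, whereas for $\Sigma_v^q(\D_N)$ it genuinely need not hold and must be assumed, which is why \eqref{2-3b} appears as a separate hypothesis here. Once that audit is done, the probabilistic estimate is identical. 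I would therefore present the proof as: "We repeat the proof of Theorem \ref{XT1} from \cite{VT159,DPSTT2}, replacing $N$ by $v$ throughout, using \eqref{Z1} in place of \eqref{I6} and \eqref{2-3b} in place of the Nikolskii inequality $\|f\|_\infty\le N^{1/q}\|f\|_q$ valid on $X_N^q$; the homogeneity of the $L_q^q$ norm extends the resulting estimate from $\Sigma_v^q(\D_N)$ to all of $\Sigma_v(\D_N)$," and then, if the referee wants detail, spell out steps (2)–(4) above.
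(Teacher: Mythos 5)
Your proposal is correct and follows essentially the same route as the paper: the paper itself proves Theorem \ref{XT2} only by asserting that it ``follows directly from the proof'' of Theorem \ref{XT1} in \cite{VT159,DPSTT2}, with exactly the substitutions you identify ($N\mapsto v$ in the entropy exponent, the assumed Nikolskii-type bound \eqref{2-3b} replacing the one that comes for free on a subspace, and homogeneity extending the estimate from $\Sigma_v^q(\D_N)$ to $\Sigma_v(\D_N)$). The only cosmetic difference is that the underlying scheme (written out in general form in Section \ref{sec:5} via Lemma \ref{BL2}) uses a multi-scale net with a level-set decomposition of $|f|^q$ rather than chaining on increments, but the structure --- nets governed by \eqref{Z1}, Bernstein-type concentration, union bound, $O(\log(2B_2v))$ scales --- is the one you describe.
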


Theorem \ref{XT2} also follows from  a  more general conditional theorem that will be proved in Section \ref{sec:5} (see Corollary \ref{Cor5.1}).

\begin{Remark}
	We point out that \eqref{Z1} implies   
	\be\label{2.3a} \|f\|_\infty \leq 3 B_1 v^{1/q},\   \  \forall f\in \Sigma^q_v(\D_N).
	\ee
Therefore, assumption (\ref{2-3b}) can be dropped with $B_2$ replaced by $3B_1$ in the bound on $m$.	 However, in  applications, the constant $B_2$ in \eqref{2-3b} may be significantly smaller than  $3B_1$.  For example, if $\D_N$ is a uniformly bounded orthonormal system with $\max_{f\in\D_N}\|f\|_\infty=1$, then we can take $B_2=1$.  
\end{Remark}

{\bf Proof of (\ref{2.3a}).} For $\Lambda\subset [1,N]\cap \N$ denote $X(\Lambda):= \sp(g_i)_{i\in\Lambda}$ and $X(\Lambda)^q := \{f\in X(\Lambda):\, \|f\|_q\le 1\}$. Clearly, (\ref{Z1}) implies the same bound for each $X(\Lambda)^q$ with $|\Lambda|=v$. Thus, it is sufficient to prove (\ref{2.3a}) for a $v$-dimensional subspace $X_v$. With a slightly worse constant $4B_1$ instead of $3B_1$ it  was proved in \cite[Remark 1.1]{DPSTT2}. We now show how to get a better constant.	Setting  $\va_1:= e_1(X_v^q,L_\infty)$, we can find two functions  $f_1, f_2 \in X_v^q$ such that $X_v^q \subset B_{L_\infty} (f_1, \va_1) \cup B_{L_\infty} (f_2, \va_1)$. 
	Since  $0\in X_v^q$, $0$ is contained in one of the two balls. Without loss of generality, we may assume that  $0\in B_{L_\infty} (f_1, \va_1)$ so that  $\|f_1\|_\infty \leq \va_1$.  Since $-f_2\in X_v^q$, we have either $-f_2 \in B_{L_\infty} (f_1, \va_1)$ or  $-f_2 \in B_{L_\infty} (f_2, \va_1)$, which implies   $\|f_2\|_\infty \leq 2\va_1$.  It then follows that $\|f\|_\infty\leq 3 \va_1$ for all $f\in X_v^q$. 
	This together with \eqref{Z1} proves 
	\eqref{2.3a}.

Theorem \ref{XT2}    motivates us to estimate the characteristics $\e_k(\Sigma_v^q(\D_N),L_\infty)$. We now recall
some known general results, which turn out to be useful for that purpose. Let $\D_N=\{g_j\}_{j=1}^N$ be a system of  elements of cardinality $|\D_N|=N$ in a Banach space $X$. Consider the best $m$-term approximations of $f$ with respect to $\D_N$
$$
\sigma_m(f,\D_N)_X:= \inf_{\{c_j\};\Lambda:|\Lambda|=m}\|f-\sum_{j\in \Lambda}c_jg_j\|.
$$
For a set $W\subset X$, we define
$$
\sigma_m(W,\D_N)_X:=\sup_{f\in W}\sigma_m(f,\D_N)_X,\   \  m=1,2,\cdots, $$
and $  \sigma_0(W,\D_N)_X=\sup_{f\in W}\|f\|_X$.
The following Theorem \ref{XT3} was proved in \cite{VT138} (see also \cite{VTbookMA}, p.331, Theorem 7.4.3).
\begin{Theorem}\label{XT3} Let a compact $W\subset X$ be such that there exist a  system $\D_N\subset X$ with  $|\D_N|=N$, and  a number $r>0$ such that 
$$
  \sigma_m(W,\D_N)_X \le (m+1)^{-r},\quad m=0,1,\cdots, N.
$$
Then for $k\le N$
\begin{equation}\label{X3}
\e_k(W,X) \le C(r) \left(\frac{\log(2N/k)}{k}\right)^r.
\end{equation}
\end{Theorem}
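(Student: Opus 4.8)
The plan is to derive the entropy bound \eqref{X3} from the hypothesis on best $m$-term approximation by combining a volumetric/coordinate-counting estimate for the entropy of low-dimensional "coordinate" balls with a telescoping (layered) decomposition of $W$ along a dyadic scale of approximation accuracies. The key observation is that the hypothesis $\sigma_m(W,\D_N)_X \le (m+1)^{-r}$ means every $f\in W$ lies within distance $(m+1)^{-r}$ of the union of all $m$-dimensional subspaces $X(\Lambda):=\sp(g_j)_{j\in\Lambda}$, $|\Lambda|=m$; there are $\binom{N}{m}$ such subspaces, and each contributes an entropy roughly $m\log(\cdots)$ at scale comparable to its approximation radius. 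So the strategy is a standard one: pick a geometric sequence of scales $m_0 < m_1 < \cdots$, refine the net stage by stage, and bound the total log-cardinality by the dominant term.

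First I would fix $k\le N$ and set up the dyadic ladder: let $m_s := \lfloor k\,2^{s}\rfloor$ (or some convenient variant bounded by $N$), run $s=0,1,\dots,L$ with $2^{L}k \asymp N$, and for each level use that $f$ is $(m_s+1)^{-r}$-close to some subspace $X(\Lambda_s)$ with $|\Lambda_s|=m_s$. Next, within a fixed $m_s$-dimensional subspace the classical volume bound gives $N_\delta(B_{X(\Lambda_s)}(0,\rho),X)\le (1+2\rho/\delta)^{m_s}$, so an $\e$-net of the portion of $W$ near a single $X(\Lambda_s)$ costs roughly $m_s\log(\text{const})$ bits, and multiplying by the $\binom{N}{m_s}\le (eN/m_s)^{m_s}$ choices of $\Lambda_s$ costs an additional $m_s\log(eN/m_s)$ bits. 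Iterating over the ladder and telescoping the errors, the $\e$ we achieve after all $L$ stages is of order $(k)^{-r}\cdot(\text{log factors})$ while the accumulated entropy is dominated (because $m_s$ grows geometrically) by its last term, of order $m_L\log(2N/m_L)\asymp k\log(2N/k)$ up to constants — wait, one must be careful to balance so that the entropy stays $\le k$ while $\e \asymp (\log(2N/k)/k)^r$. Concretely I would instead choose the number of stages and the growth rate of $m_s$ so that $\sum_s m_s\log(eN/m_s) \lesssim k$, which forces the approximation accuracy at the coarsest useful scale, i.e. $\e \asymp \big(k/\log(2N/k)\big)^{-r} = \big(\log(2N/k)/k\big)^{r}$, giving \eqref{X3} with $C=C(r)$.

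The main obstacle I expect is the bookkeeping in the layered net construction: one must verify that at each stage the new $\delta_s$-perturbation can be covered by few enough balls \emph{and} that the residual piece $W$ minus the current net still admits the next-stage $m_{s+1}$-term approximation bound — this requires that approximation by the larger subspace of the \emph{residual} (not of the original $f$) is still controlled, which follows because $\sigma_{m_{s+1}}$ is monotone and the residual has norm $\lesssim\delta_s$, so $\sigma_{m_{s+1}}(\text{residual})\le\|\text{residual}\|\lesssim\delta_s$ trivially, and then one takes $\min$ of the two competing bounds $(m_{s+1}+1)^{-r}$ and $\delta_s$ at each level. Getting the two geometric series (the errors $\delta_s$ and the entropies $m_s\log(eN/m_s)$) to balance so that the logarithmic correction comes out as exactly $\log(2N/k)$ rather than, say, $\log N$ or $(\log N)^2$, is the delicate point; this is precisely where the choice $m_s\asymp k2^s$ (geometric, anchored at $k$, capped at $N$) does the work, since then $L\asymp\log(2N/k)$ and the worst single error scale is $(k)^{-r}$ up to the constant $C(r)$ absorbing the sum $\sum_s 2^{-rs}$. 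I would assume the elementary volumetric covering estimate for finite-dimensional balls as known (it is standard and already implicitly used elsewhere in this circle of ideas) and cite \cite{VTbookMA} for the full details of the balancing, since the statement here is quoted from \cite{VT138}.
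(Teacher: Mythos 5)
The paper does not actually prove Theorem~\ref{XT3}: it is quoted from \cite{VT138} (see also \cite{VTbookMA}, Theorem~7.4.3), so there is no in-paper argument to compare against. Your overall strategy --- a layered net built from the $m$-term approximants, with a $\binom{N}{m}$ combinatorial factor for the choice of the index set $\Lambda$, a volumetric covering inside each coordinate subspace $X(\Lambda)=\sp(g_j)_{j\in\Lambda}$, and a telescoping over a geometric ladder of values of $m$ --- is indeed the strategy of the cited proof.

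There is, however, a genuine quantitative gap in the version you commit to at the end, and it is not mere bookkeeping. With $m_s\asymp k2^s$ anchored at $m_0\asymp k$ and capped at $N$, already the coarsest level costs $\log_2\binom{N}{k}\asymp k\log(eN/k)$ bits just to record which index set $\Lambda_0$ with $|\Lambda_0|=k$ carries the approximant; this alone exceeds the entropy budget $k$ whenever $N\gg k$. Accordingly, the accuracy ``$\asymp k^{-r}$'' announced in your last paragraph cannot be achieved with $2^k$ balls; it would even contradict known lower bounds (for $W=A_1(\D_N)$ with $\D_N$ orthonormal in a Hilbert space one has $r=1/2$ and $\e_k(W,X)\asymp(\log(eN/k)/k)^{1/2}$, not $k^{-1/2}$). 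The correct balance --- which you do state once in your middle paragraph before reverting --- is to anchor the ladder at $m_0=O(1)$ and cap it at $m_L\asymp k/\log(2N/k)$: then $\sum_s m_s\log(eN/m_s)\lesssim m_L\log(eN/m_L)\lesssim k$, while the accuracy is governed by the \emph{top} (finest) level, $(m_L+1)^{-r}\asymp(\log(2N/k)/k)^{r}$; the logarithm in \eqref{X3} is a genuine loss forced by the combinatorial factor, not something the ladder can optimize away. A second, smaller repair: the telescoping must be performed on the increments $a_{m_{s+1}}(f)-a_{m_s}(f)$, which lie in the coordinate subspace indexed by $\Lambda_{m_s}(f)\cup\Lambda_{m_{s+1}}(f)$ and have norm at most $2(m_s+1)^{-r}$, so that each level needs only $O(1)$ bits per dimension; your discussion of ``$\sigma_{m_{s+1}}$ of the residual'' conflates these increments with the residual $f-a_{m_s}(f)$, which need not belong to $W$, so the hypothesis of the theorem does not apply to it. With these two corrections the argument closes and coincides with the proof in \cite{VT138}.
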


For a given set $\D_N=\{g_j\}_{j=1}^N$ of elements we introduce the octahedron (generalized octahedron)
\be\label{X4}
A_1(\D_N) := \left\{f\,:\, f=\sum_{j=1}^N c_jg_j,\quad \sum_{j=1}^N |c_j|\le 1\right\}
\ee
and the norm $\|\cdot\|_A$ on $X_N$
$$
\|f\|_A := \inf\left\{ \sum_{j=1}^N |c_j|\,:\, f=\sum_{j=1}^N c_jg_j \right\}.
$$

We now use a known general result for a smooth Banach space. For a Banach space $X$ we define the modulus of smoothness
$$
\rho(u):= \rho(X,u) := \sup_{\|x\|=\|y\|=1}\left(\frac{1}{2}(\|x+uy\|+\|x-uy\|)-1\right).
$$
The uniformly smooth Banach space is the one with the property
$$
\lim_{u\to 0}\rho(u)/u =0.
$$
In this paper we only consider uniformly smooth Banach spaces with power type modulii of smoothness $\rho(u) \le \ga u^s$, $1< s\le 2$. The following bound is a corollary of greedy approximation results (see, for instance \cite{VTbookMA}, p.455).

 \begin{Theorem}\label{XT4} Let $X$ be $s$-smooth: $\rho(X,u) \le \gamma u^s$, $1<s\le 2$. 
 Then for any  normalized system $\D_N$ of cardinality $|\D_N|=N$ we have
 $$
 \sigma_m(A_1(\D_N), X) \le C(s)\gamma^{1/s}m^{1/s-1}.
 $$
 \end{Theorem}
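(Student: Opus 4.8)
The plan is to produce, for every $f\in A_1(\D_N)$, an $m$-term approximant by running a greedy algorithm directly against the dictionary, and to bound its error by $C(s)\gamma^{1/s}m^{1/s-1}$. Since $1<s\le2$, the bound $\rho(X,u)\le\gamma u^s$ forces $\rho(X,u)/u\to0$, so $X$ is uniformly smooth and every nonzero element has a unique norming functional. Fix $f=\sum_j c_jg_j$ with $\sum_j|c_j|\le1$ (so $\|f\|_X\le\sum_j|c_j|\,\|g_j\|_X\le1$, as $\D_N$ is normalized) and run the Weak Chebyshev Greedy Algorithm with weakness $t=1$: set $f_0=f$, $G_0=0$, and for $m\ge1$ let $F_{m-1}$ be the norming functional of $f_{m-1}$, pick $\varphi_m\in\D_N$ with $|F_{m-1}(\varphi_m)|=\max_{g\in\D_N}|F_{m-1}(g)|$ (a genuine maximum, since $|\D_N|=N<\infty$), let $G_m$ be a best approximation of $f$ from $W_m:=\sp(\varphi_1,\dots,\varphi_m)$ (which exists as $W_m$ is finite-dimensional), and $f_m:=f-G_m$. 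With $a_m:=\|f_m\|_X$ we have $a_m\le a_{m-1}$, $a_0\le1$, and $\sigma_m(f,\D_N)_X\le a_m$, so it suffices to control $a_m$.

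The estimate rests on two inputs. (i) Since $G_{m-1}$ is a best approximation of $f$ from $W_{m-1}$ in a smooth space, the unique norming functional $F_{m-1}$ of $f_{m-1}$ must annihilate $W_{m-1}$; hence
\[
a_{m-1}=F_{m-1}(f_{m-1})=F_{m-1}(f)=\sum_j c_jF_{m-1}(g_j)\le\max_{g\in\D_N}|F_{m-1}(g)|=|F_{m-1}(\varphi_m)| ,
\]
and after a sign change of $\varphi_m$ if necessary (this only changes the element we add when forming $G_m$, not $W_m$ nor $\|\varphi_m\|=1$) we may assume $F_{m-1}(\varphi_m)\ge a_{m-1}$. (ii) For every $\lambda>0$, $G_{m-1}+\lambda\varphi_m\in W_m$, so $a_m\le\|f_{m-1}-\lambda\varphi_m\|$; combining the rescaled definition of the modulus of smoothness
\[
\|f_{m-1}-\lambda\varphi_m\|+\|f_{m-1}+\lambda\varphi_m\|\le 2a_{m-1}\Bigl(1+\rho\bigl(\lambda/a_{m-1}\bigr)\Bigr)
\]
with $\|f_{m-1}+\lambda\varphi_m\|\ge F_{m-1}(f_{m-1}+\lambda\varphi_m)\ge a_{m-1}(1+\lambda)$ and $\rho(u)\le\gamma u^s$ gives, for all $\lambda>0$,
\[
a_m\le a_{m-1}+2\gamma\lambda^s a_{m-1}^{\,1-s}-\lambda a_{m-1} .
\]

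Minimizing the right side in $\lambda$ (optimal $\lambda_*=(a_{m-1}^s/(2s\gamma))^{1/(s-1)}$) reduces this to a recursion of the shape
\[
a_m\le a_{m-1}-c(s)\,\gamma^{-1/(s-1)}\,a_{m-1}^{\,1+\theta},\qquad \theta:=\tfrac{s}{s-1}\ge 2 ,
\]
with $c(s)>0$ depending only on $s$. As $a_m\ge0$, this forces $c(s)\gamma^{-1/(s-1)}a_{m-1}^{\theta}\le1$, so passing to $b_m:=a_m^{-\theta}$ and using the convexity bound $(1-x)^{-\theta}\ge1+\theta x$ on $[0,1)$ gives $b_m\ge b_{m-1}+\theta c(s)\gamma^{-1/(s-1)}$, whence $b_m\ge m\theta c(s)\gamma^{-1/(s-1)}$; inverting and using $(s-1)\theta=s$ yields $a_m\le C(s)\gamma^{1/s}m^{1/s-1}$. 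Taking the supremum over $f\in A_1(\D_N)$ completes the proof.

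The one genuinely structural point is input (i): the interaction between the Chebyshev (best-approximation) step and the norming functional. Uniform smoothness of $X$ is precisely what makes $F_{m-1}$ the unique norming functional of $f_{m-1}$ and forces it to annihilate $W_{m-1}$, which is what lets the membership $f\in A_1(\D_N)$ feed into the greedy lower bound $|F_{m-1}(\varphi_m)|\ge a_{m-1}$; without it the chain collapses. Everything else — the smoothness inequality, the optimization in $\lambda$, and the resolution of the recursion — is routine, the only care being to carry the dependence on $\gamma$ (and on the weakness parameter, were one to allow $t<1$, giving $C(s,t)$) through the computation, which is what produces the factor $\gamma^{1/s}$. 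A fully probabilistic alternative is available via Maurey's empirical method: write $f$ as the mean of i.i.d.\ $\D_N$-valued random variables $\xi_i$ (with $\|\xi_i\|_X\le1$), and apply to $d_i:=\tfrac1m(\xi_i-f)$ the martingale-type-$s$ inequality $\bE\|\sum_i d_i\|_X^s\le C(s)\,\gamma\sum_i\bE\|d_i\|_X^s$, a reformulation of power-$s$ smoothness; since $\|d_i\|_X\le2/m$, this gives an $m$-term average within $C(s)\gamma^{1/s}m^{1/s-1}$ of $f$ in expectation, hence for some realization.
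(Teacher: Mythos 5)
Your proof is correct: the WCGA argument -- the norming functional of $f_{m-1}$ annihilating $W_{m-1}$ by the best-approximation characterization in a smooth space, the greedy lower bound $F_{m-1}(\varphi_m)\ge a_{m-1}$ from $f\in A_1(\D_N)$, the smoothness inequality, and the resolution of the recursion for $b_m=a_m^{-\theta}$ -- all check out, including the tracking of the $\gamma^{1/s}$ dependence. The paper does not prove this theorem but quotes it from \cite{VTbookMA}, and your argument is essentially the standard proof given there (your Maurey-type alternative at the end is also a valid, genuinely different route).
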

 
 Note that it is known that in the case $X=L_p$ we have
 \be\label{X5-0}
 \rho(L_p,u) \le (p-1)u^2/2,\quad 2\le p<\infty.
 \ee
 
 We now proceed to a special case when $X=L_p$ and $\D_N=\Phi_N:=\{\ff_j\}_{j=1}^N$ is a uniformly bounded Riesz basis of $X_N:=[\Phi_N]:=\sp(\ff_1,\dots,\ff_N)$. Namely, we assume 
 that $\|\ff_j\|_\infty \le 1$, $1\le j\le N$ and for any $(a_1,\cdots, a_N) \in\R^N,$
\begin{equation}\label{Riesz}
R_1 \left( \sum_{j=1}^N |a_j|^2\right)^{1/2} \le \left\|\sum_{j=1}^N a_j\ff_j\right\|_2 \le R_2 \left( \sum_{j=1}^N |a_j|^2\right)^{1/2},
\end{equation}
where $0< R_1 \le R_2 <\infty$. Assume in addition that for any $f\in X_N$ we have
\be\label{X5a}
\|f\|_\infty \le C_0\|f\|_{\log N}.
\ee

\begin{Theorem}\label{XT5}   
Assume that $\Phi_N$ is a uniformly bounded Riesz basis of $X_N:=[\Phi_N]$ satisfying (\ref{X5a}). Then   we have
 \be\label{X5}
 \e_k(\Sigma_v^2(\Phi_N),L_\infty) \le C(R_1,C_0) (\log N) \Bigl(\frac vk\Bigr)^{1/2},\quad   k=1,2,\dots.
 \ee
 \end{Theorem}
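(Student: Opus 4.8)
The plan is to reduce the estimate \eqref{X5} to an application of Theorem \ref{XT3} for a suitably chosen finite system, after passing to an intermediate smooth-space estimate via Theorem \ref{XT4}. First I would fix $\Lambda\subset[1,N]$ with $|\Lambda|=v$ and set $X_\Lambda:=\sp(\ff_j)_{j\in\Lambda}$. By the Riesz basis property \eqref{Riesz}, for $f=\sum_{j\in\Lambda}c_j\ff_j\in X_\Lambda^2$ (i.e.\ $\|f\|_2\le 1$) we have $\sum_{j\in\Lambda}|c_j|^2\le R_1^{-2}$, hence by Cauchy--Schwarz $\|f\|_A\le\sum_{j\in\Lambda}|c_j|\le v^{1/2}R_1^{-1}$. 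In other words $X_\Lambda^2\subset R_1^{-1}v^{1/2}A_1(\{\ff_j\}_{j\in\Lambda})$, and taking the union over all $\Lambda$ of cardinality $v$ gives $\Sigma_v^2(\Phi_N)\subset R_1^{-1}v^{1/2}\,A_1(\Phi_N)$, where $A_1(\Phi_N)$ is the octahedron \eqref{X4} built from the full system $\Phi_N$.

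The next step is to estimate $\sigma_m\bigl(A_1(\Phi_N),L_\infty\bigr)$; but $L_\infty$ is not smooth, so I would instead work in $L_p$ with $p=\log N$ (this is exactly where hypothesis \eqref{X5a} will be used). By \eqref{X5-0}, $L_p$ is $2$-smooth with $\rho(L_p,u)\le (p-1)u^2/2$, and since $\|\ff_j\|_\infty\le 1$ the system $\Phi_N$ is bounded in $L_p$ (normalizing costs only an absolute constant). Theorem \ref{XT4} with $s=2$, $\gamma\asymp p=\log N$ then yields
\begin{equation}\label{plan1}
\sigma_m\bigl(A_1(\Phi_N),L_p\bigr)\le C(\log N)^{1/2}m^{-1/2},\quad m=1,2,\dots,N.
\end{equation}
Now I use the embedding hypothesis \eqref{X5a}: every element produced in the $L_p$-approximation lies in $X_N$ (it is a linear combination of the $\ff_j$), so $\|g\|_\infty\le C_0\|g\|_{\log N}$ for every such $g$; in particular $\sigma_m\bigl(A_1(\Phi_N),L_\infty\bigr)\le C_0\,\sigma_m\bigl(A_1(\Phi_N),L_{\log N}\bigr)$, which combined with \eqref{plan1} gives
\begin{equation}\label{plan2}
\sigma_m\bigl(A_1(\Phi_N),L_\infty\bigr)\le C(C_0)(\log N)^{1/2}m^{-1/2},\quad m=1,\dots,N.
\end{equation}

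To conclude, set $W:=\bigl(C(C_0)(\log N)^{1/2}\bigr)^{-2}A_1(\Phi_N)$, so that by \eqref{plan2} and a harmless shift of the index (replacing $m$ by $m+1$), $\sigma_m(W,L_\infty)\le (m+1)^{-1/2}$ for $m=0,1,\dots,N$, i.e.\ the hypothesis of Theorem \ref{XT3} holds with $r=1/2$. Theorem \ref{XT3} then gives $\e_k(W,L_\infty)\le C\bigl(\log(2N/k)/k\bigr)^{1/2}$ for $k\le N$, hence $\e_k\bigl(A_1(\Phi_N),L_\infty\bigr)\le C(C_0)(\log N)\bigl(\log(2N/k)/k\bigr)^{1/2}$; absorbing the factor $\bigl(\log(2N/k)\bigr)^{1/2}\le(\log(2N))^{1/2}$ into the $\log N$ (for $k\le N$) and rescaling by $R_1^{-1}v^{1/2}$ through the inclusion $\Sigma_v^2(\Phi_N)\subset R_1^{-1}v^{1/2}A_1(\Phi_N)$, we obtain $\e_k(\Sigma_v^2(\Phi_N),L_\infty)\le C(R_1,C_0)(\log N)(v/k)^{1/2}$ for $k\le N$. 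For $k>N$ one needs a separate (easy) argument: since $\Sigma_v^2(\Phi_N)\subset X_\Lambda^2$-union lives in $N$-dimensional space and is bounded in $L_\infty$ by $3B_1$-type estimates, the entropy numbers decay geometrically, so the bound $(v/k)^{1/2}$ (which for $k>N$ is weaker than what one gets) continues to hold after adjusting the constant; alternatively one reduces to the $v$-dimensional subspaces directly. The main obstacle I anticipate is bookkeeping the logarithmic factors cleanly—in particular making sure the passage from $L_{\log N}$ to $L_\infty$ via \eqref{X5a} and the $(\log(2N/k))^{1/2}$ term from Theorem \ref{XT3} combine to exactly one power of $\log N$ and not more—and handling the range $k>N$ so that the single clean bound in \eqref{X5} holds for all $k\ge1$.
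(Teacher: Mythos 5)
Your proposal follows essentially the same route as the paper: the inclusion $\Sigma_v^2(\Phi_N)\subset R_1^{-1}v^{1/2}A_1(\Phi_N)$, Theorem \ref{XT4} with $s=2$ in $L_p$ for $p=\log N$, Theorem \ref{XT3} with $r=1/2$, hypothesis \eqref{X5a} to pass to $L_\infty$, and submultiplicativity of entropy numbers for $k>N$; the only (immaterial) difference is that you convert $\sigma_m$ from $L_{\log N}$ to $L_\infty$ before invoking Theorem \ref{XT3}, while the paper applies Theorem \ref{XT3} in $L_p$ and converts at the level of entropy numbers. One small correction: the normalization should be $W=\bigl(C(C_0)(\log N)^{1/2}\bigr)^{-1}A_1(\Phi_N)$ (first power, not square), which gives $\e_k\bigl(A_1(\Phi_N),L_\infty\bigr)\le C(C_0)(\log N)^{1/2}\bigl(\log(2N/k)/k\bigr)^{1/2}$ and hence, after bounding $\log(2N/k)\le\log(2N)$, exactly the single power of $\log N$ you were worried about.
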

 \begin{proof} First of all, for any $f=\sum_{j\in G }a_j\ff_j$, $|G|=v$ we get
  \be\label{X6}
 \|f\|_A \le \sum_{j\in G} |a_j| \le v^{1/2} \left( \sum_{j\in G} |a_j|^2\right)^{1/2} \le R_1^{-1}v^{1/2}\|f\|_2.
 \ee
 Therefore, 
 $$
 \Sigma_v^2(\Phi_N) \subset R_1^{-1}v^{1/2}\Sigma_v^A(\Phi_N),$$
 where $$ R\Sigma_v^A(\Phi_N):=\{f\in\Sigma_v(\Phi_N)\,:\, \|f\|_A \le R\}.
 $$
 By Theorem \ref{XT4} with $s=2$ and by (\ref{X5-0}), we have that 
 for $p\in[2,\infty)$
 \be\label{X7-0}
 \sigma_m(\Sigma_v^2(\Phi_N),L_p) \le C R_1^{-1} v^{1/2} \sqrt p m^{-\frac12},\quad   m=1,2,\cdots,N.
 \ee
 Thus,   Theorem \ref{XT3} implies that  for $p\in[2,\infty)$
 \be\label{X7}
  \e_k(\Sigma_v^2(\Phi_N),L_p) \le C(R_1) (p \log(2N/k))^{1/2}(v/k)^{1/2},\quad   k=1,2,\dots,N.
 \ee
 Second, by (\ref{X5a})  we obtain  
 \be\label{X8}
   \e_k(\Sigma_v^2(\Phi_N),L_\infty)\le C_0\e_k(\Sigma_v^2(\Phi_N),L_{\log N}). 
 \ee 
 Combining (\ref{X7}) and (\ref{X8}) we get   
 \be\label{X9}
  \e_k(\Sigma_v^2(\Phi_N),L_\infty) \le C(R_1,C_0) (\log N)  (v/k)^{1/2},\quad   k=1,2,\dots,N.
 \ee

 Finally, for $k>N$ we use the inequalities
 $$
 \e_k(W,L_\infty) \le \e_N(W,L_\infty)\e_{k-N}(X_N^\infty,L_\infty)
 $$
 and 
 $$
 \e_{n}(X_N^\infty,L_\infty) \le 3(2^{-n/N}),\qquad 2^{-x} \le 1/x,\quad x\ge 1,
 $$
 to obtain (\ref{X9}) for all $k$.
 This completes the proof. 
 
 \end{proof}

\section{A step from $p=2$ to $1\le p<2$}
\label{Y}

In this section we show how Theorem \ref{XT5} proved in Section \ref{X} for $p=2$ can be extended to the case $1\le p<2$. This extension step is based on a general inequality for 
the entropy. For convenience, we  set $\Sigma_v(\D_N)=X_N:=[\D_N]$ for $v> N$.  

\begin{Lemma}\label{YL1} For $v=1,2,\dots, N$,   $1\le p< 2<q\leq \infty$ and 	$\ta:=(\frac 12-\frac 1q)/(\frac 1p-\frac1q)$, we have for $\va>0$
	\begin{equation}\label{0-9}
	\cH_{\va} (\Sigma_v^p(\D_N); L_{q}) \le \sum_{s=0}^\infty \cH_{ 2^{-3}a^{s-1} \va^{\ta} } (\Sigma_{2v}^2(\D_N); L_{q})+\cH_{\va^\ta} (\Sigma_{2v}^2(\D_N); L_{q}), 
	\end{equation}
	where $a=a(\ta) =2^{\frac \ta {1-\ta}}$.
	
\end{Lemma}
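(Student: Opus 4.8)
The plan is to decompose an arbitrary element of $\Sigma_v^p(\D_N)$ into a "big coefficients" part and a "small coefficients" part, bound each piece using the $p=2$ entropy estimates for $\Sigma_{2v}^2(\D_N)$, and then glue the nets together. Fix $f=\sum_{j\in G}c_j g_j\in\Sigma_v^p(\D_N)$ with $|G|=v$. The key interpolation-type observation is that for a $v$-sparse $f$ one has $\|f\|_2\le\|f\|_p^{1-?}\cdots$ — more precisely, writing $\ta=(\tfrac12-\tfrac1q)/(\tfrac1p-\tfrac1q)$ so that $\tfrac12=\ta\cdot\tfrac1p+(1-\ta)\cdot\tfrac1q$, convexity of $\ell_r$-norms on a set of $v$ coordinates (or H\"older) gives control of the $L_2$-size of pieces of $f$ in terms of $\|f\|_p$ to the power $\ta$. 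This is where the exponent $\va^\ta$ appears: a ball of $L_p$-radius $\va$ sits, after sparsification, inside a ball of $L_2$-radius comparable to $\va^\ta$.

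The main construction is a greedy/layer decomposition. Order the coefficients $|c_{j_1}|\ge|c_{j_2}|\ge\cdots$ and split $G$ into dyadic blocks $G_s$ according to the size of $|c_j|$ relative to a geometric scale with ratio $a=a(\ta)=2^{\ta/(1-\ta)}$; the choice of this ratio is dictated precisely by the requirement that the geometric series of net-radii $2^{-3}a^{s-1}\va^\ta$ telescopes correctly against the increasing cardinalities of the blocks. Each partial sum $f_s=\sum_{j\in G_s}c_jg_j$ has at most $2v$ terms (this is the reason $2v$ and not $v$ appears on the right-hand side), and its $L_2$-norm is controlled so that $f_s$ lies in a dilate of $\Sigma_{2v}^2(\D_N)$ of radius comparable to $a^{s-1}\va^\ta$ up to the absolute constant $2^{-3}$. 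One then takes, for each $s$, a minimal $2^{-3}a^{s-1}\va^\ta$-net of $\Sigma_{2v}^2(\D_N)$ in $L_q$, plus one extra $\va^\ta$-net to absorb the coarsest/tail contribution; summing a point from each net across $s$ produces a net for $\Sigma_v^p(\D_N)$ in $L_q$, and the cardinality bound \eqref{0-9} follows by taking $\log_2$ of the product of covering numbers.

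The step I expect to be the main obstacle is the bookkeeping that simultaneously (i) keeps every block $v$-sparse — or at worst $2v$-sparse — while (ii) ensuring the errors from approximating each $f_s$ within its own net sum to something $\le\va$ in $L_q$, and (iii) ensuring the sum over $s$ of the net errors telescopes: this forces the precise interplay between the decay rate $a^{s-1}$ of the radii and the growth of the number of blocks, and it is exactly the place where the formula $a=2^{\ta/(1-\ta)}$ is forced. A subtlety is that the decomposition must be done uniformly over all $f$ (the blocks $G_s$ depend on $f$), so one must check that only countably many "shapes" of decomposition matter, or more simply that the union over all sparse patterns is already accounted for by working with $\Sigma_{2v}^2(\D_N)$ rather than a fixed subspace. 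The remaining estimates — the convexity inequality giving the $\va^\ta$ scaling, and the triangle inequality assembling the nets — are routine once the dyadic scaling is set up correctly.
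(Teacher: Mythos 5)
Your proposal takes a coefficient-decomposition route that does not work here, for several concrete reasons.

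First, the lemma is stated for a general dictionary $\D_N$ and for the $L_r(\Omega,\mu)$ norms of functions, not for $\ell_r$ norms of coefficient vectors. Splitting $G$ into blocks $G_s$ by the size of $|c_j|$ gives you no control whatsoever on $\|f_s\|_2$ or $\|f_s\|_p$ in terms of $\|f\|_p$: there is no Riesz or unconditionality hypothesis in Lemma \ref{YL1}, so the "convexity of $\ell_r$-norms on $v$ coordinates" you invoke is not available. Second, the interpolation goes the wrong way for your purposes: since $p<2$ and $\mu$ is a probability measure, $\|f\|_p\le\|f\|_2$, so an $L_p$-ball of radius $\va$ is \emph{not} contained in an $L_2$-ball of radius $\sim\va^\ta$. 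The inequality $\|g\|_2\le\|g\|_p^\ta\|g\|_q^{1-\ta}$ (with $\tfrac12=\tfrac\ta p+\tfrac{1-\ta}q$) only yields $L_2$-smallness if you also have an $L_q$ (in fact $L_\infty$) bound, which your big/small coefficient split does not supply. Third, your assembly step is fatally flawed on its own terms: the radii $2^{-3}a^{s-1}\va^\ta$ are \emph{increasing} in $s$ (since $a=2^{\ta/(1-\ta)}>1$), so "summing a point from each net across $s$" accumulates a total error $\sum_s 2^{-3}a^{s-1}\va^\ta=\infty$; no choice of the ratio $a$ makes that series telescope. Finally, the $2v$ on the right-hand side does not come from blocks of a single $f$ having $2v$ terms; it comes from the fact that a \emph{difference} of two $v$-sparse functions is $2v$-sparse.

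The paper's argument is structured quite differently and avoids all of this. Step one is a submultiplicativity inequality
\[
\cH_{\va_1\va_2}(\Sigma_v^p(\D_N);L_q)\le \cH_{\va_1}(\Sigma_v^p(\D_N);L_2)+\cH_{\va_2}(\Sigma_{2v}^2(\D_N);L_q),
\]
obtained by covering $\Sigma_v^p$ by $L_2$-balls of radius $\va_1$, observing that each such ball intersected with $\Sigma_v(\D_N)$ sits inside $x_i+\va_1\Sigma_{2v}^2(\D_N)$ (this is where $2v$ enters), and then covering that set in $L_q$. One sets $\va_1=\va^{1-\ta}$, $\va_2=\va^\ta$. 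Step two bounds $\cH_{\va_1}(\Sigma_v^p;L_2)$ by the sum over $s$: for each $s$ take a maximal $2^s\va_1$-separated set $\FF_s$ in $L_2$, count how many points of $\FF_s$ can fit in a single $L_2$-ball of radius $2^{s+2}\va_1$ around a point of $\FF_{s+2}$, rescale the differences to land in $\Sigma_{2v}^2(\D_N)$ with $\|g\|_2\le1$ and $\|g\|_p\le(2^{s+1}\va_1)^{-1}$, and use the interpolation inequality on \emph{differences} to convert $L_2$-separation plus the $L_p$ upper bound into $L_q$-separation at scale $2^{-2}a^{s-1}\va^\ta$. The telescoping is in the entropy differences $\cH_{2^s\va_1}-\cH_{2^{s+1}\va_1}$, not in net radii. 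If you want to salvage your write-up, you should abandon the coefficient blocks entirely and instead work with separated sets of whole functions and the two-step scheme above.
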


\begin{proof} In  the case when $v=N$ Lemma \ref{YL1} was proved in \cite{DPSTT2}.  A slight modification of the proof there works equally well for a general case of $1\leq v\leq N$. For completeness, we include the proof of this lemma here. First, we note that for any $\va_1, \va_2>0$, 
	\begin{equation}\label{2-2}
	\cH_{\va_1\va_2} (\Sigma_v^p(\D_N); L_q) \leq \cH_{\va_1} (\Sigma_v^p(\D_N); L_2)+\cH_{\va_2}(\Sigma_{2v}^2(\D_N); L_q).
	\end{equation}
	To see this,  let 
	$x_1,\cdots, x_{N_1}\in\Sigma_v^p(\D_N)$ and $y_1,\cdots, y_{N_2}\in \Sigma_{2v}^2(\D_N)$  be such that 
	\[\Sigma_v^p(\D_N) \subset \bigcup_{i=1}^{N_1} \left( x_i + \va_1 B_{L_2}\right)\    \   \text{and}\   \  \Sigma_{2v}^2(\D_N) \subset \bigcup_{j=1}^{N_2} (y_j  + \va_2 B_{L_q}),\]
	where  $N_1=N_{\va_1}(\Sigma_v^p(\D_N), L_2)$ and $N_2=N_{\va_2}(\Sigma_{2v}^2(\D_N), L_q)$.
	Since  $\Sigma_v(\D_N) +\Sigma_v(\D_N) \subset \Sigma_{2v}(\D_N)$, we have 
	\begin{align*}
	\Sigma_v^p(\D_N) &\subset \bigcup_{i=1}^{N_1} \left( x_i + \va_1 B_{L_2}\right)\cap \Sigma_v(\D_N) \subset  \bigcup_{i=1}^{N_1} \left( x_i + \va_1 \Sigma_{2v}^2(\D_N)\right)\\
	& \subset  \bigcup_{i=1}^{N_1} \bigcup_{j=1}^{N_2} \left( x_i + \va_1y_j +\va_1\va_2 B_{L_q}\right).
	\end{align*}
Inequality	 \eqref{2-2} then follows.

	Next,  setting
	$\va_1:=\va^{1-\ta}$ and $\va_2=\va^\ta$ in \eqref{2-2},  we reduce to showing that
	\begin{equation}\label{4-16}
	\cH_{\va_1} (\Sigma_v^p(\D_N); L_2) \leq \sum_{s=0}^\infty \cH_{ 2^{-3}a^{s-1} \va^{\ta} } (\Sigma_{2v}^2(\D_N); L_{q}).
	\end{equation}
	 	It will be shown that for $s=0,1,\ldots,$
	\begin{align}\label{3-19-a}
	\cH_{2^{s} \va_1} (\Sigma_v^p(\D_N); L_2) -\cH_{2^{s+1} \va_1} (\Sigma_v^p(\D_N); L_2)\leq \cH_{ 2^{-3}a^{s-1} \va^{\ta} } (\Sigma_{2v}^2(\D_N); L_{q})  ,
	\end{align}
	from which \eqref{4-16} will follow by taking  the sum over $s=0,1,\ldots$

	To show \eqref{3-19-a}, for  each nonnegative  integer $s$, let $\FF_s\subset \Sigma_v^p(\D_N)$ be a maximal $2^s \va_1$-separated subset of $\Sigma_v^p(\D_N)$ in the metric $L_2$; that is, $\|f-g\|_2\ge 2^s\va_1$ for any two distinct functions $f, g\in \FF_s$, and $\Sigma_v^p(\D_N)\subset \bigcup_{f\in\FF_s} B_{L_2} (f, 2^s\va_1)$.  Then
	\begin{equation}\label{4-17}
	\cH_{2^s \va_1}(\Sigma_v^p(\D_N); L_2) \leq  \log_2 |\FF_s| \leq \cH_{2^{s-1}\va_1}(\Sigma_v^p(\D_N); L_2).
	\end{equation}
	Let   $f_s\in \FF_{s+2}$  be such that
	$$\left| B_{L_2}(f_s, 2^{s+2} \va_1)\cap \FF_s\right|=\max_{f\in\FF_{s+2}} \left| B_{L_2}(f, 2^{s+2} \va_1)\cap \FF_s\right|.$$
	Since
	\begin{align*}
	\FF_s =\bigcup_{f\in \FF_{s+2}}\left(B_{L_2}(f, 2^{s+2} \va_1)\cap \FF_s\right)\subset \Sigma_v^p(\D_N),
	\end{align*}
	it follows that
	\begin{align}\label{4-18}
	|\FF_s|\leq |\FF_{s+2}| \left| B_{L_2}(f_s, 2^{s+2} \va_1)\cap \FF_s\right|.
	\end{align}
	Set
	$$ \cA_s:=\left\{ \frac{f-f_s}{2^{s+2} \va_1}:\   \        f\in B_{L_2} (f_s, 2^{s+2} \va_1) \cap \FF_s\right\}\subset \Sigma_{2v}(\D_N).$$
	Clearly,  for any $g\in\cA_s$,
	\begin{equation}\label{3-22-a}
	\|g\|_2\leq 1,\   \  \|g\|_p\leq  (2^{s+1} \va_1)^{-1}.
	\end{equation}
	On the one hand,  using  \eqref{4-17} and \eqref{4-18}, we obtain  that
	\begin{align}\label{3-23-a}
	\log_2|\cA_s|&\ge \log_2|\FF_s|-\log_2|\FF_{s+2}|\notag\\
	& \ge  \cH_{2^{s} \va_1} (\Sigma_v^p(\D_N); L_2) -\cH_{2^{s+1} \va_1} (\Sigma_v^p(\D_N); L_2) .
	\end{align}
	On the other hand, since $\frac 12 =\frac \ta p+\frac {1-\ta}q$,  using \eqref{3-22-a} and the fact that $\FF_s$ is $2^s\va_1$-separated in the $L_2$-metric, we have that  for any two distinct  $g',  g\in \cA_s$,
	\begin{align*}
	2^{-2} \leq  \|g'-g\|_2\leq \|g'-g\|_p^\ta \|g-g'\|_q^{1-\ta}\leq  \left (2^{s+1} \va_1\right)^{-\ta} \|g-g'\|_q^{1-\ta},
	\end{align*}
	which implies that
	\[ \|g'-g\|_q \ge 2^{-2}(2^{s-1} \va_1)^{\frac \ta {1-\ta}}=2^{-2} a^{s-1} \va^\ta.\]
	This together with  \eqref{3-22-a} means  that   $\cA_s$ is a $2^{-2} a^{s-1} \va^\ta $-separated subset of $\Sigma_{2v}^2(\D_N)$ in the metric $L_{q}$. We obtain
	\begin{align}\label{3-24-a}
	\log_2 |\cA_s| \leq \cH_{ 2^{-3} a^{s-1} \va^\ta  } (\Sigma_{2v}^2(\D_N); L_q).
	\end{align}
	Thus, combining  \eqref{3-24-a} with \eqref{3-23-a}, we prove  inequality \eqref{3-19-a}. 
\end{proof}

Lemma \ref{YL1} with $1\le p<2$, $q=\infty$, $\ta=p/2$ and Theorem \ref{XT5} imply the following 
bound for the entropy numbers.

\begin{Theorem}\label{YT1}   
Assume that $\Phi_N$ is a uniformly bounded Riesz basis of $X_N:=[\Phi_N]$ satisfying (\ref{X5a}). Then  for $1\leq p\leq 2$,  we have
 \be\label{X5}
 \e_k(\Sigma_v^p(\Phi_N),L_\infty) \le C(p,R_1,C_0) (\log N)^{2/p} (v/k)^{1/p},\quad   k=1,2,\dots.
 \ee
 \end{Theorem}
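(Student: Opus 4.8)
The case $p=2$ is exactly Theorem~\ref{XT5}, so I may assume $1\le p<2$. The plan is to combine Lemma~\ref{YL1}, taken with $q=\infty$ --- so that $\ta=(\tfrac12)/(\tfrac1p)=p/2\in[\tfrac12,1)$ and $a=a(\ta)=2^{p/(2-p)}\ge 2$ --- with the $p=2$ bound of Theorem~\ref{XT5} applied to $\Sigma_{2v}^2(\Phi_N)$. Concretely I would proceed in three steps: (i) rewrite Theorem~\ref{XT5} as an upper bound for the $\va$-entropy $\cH_\delta(\Sigma_{2v}^2(\Phi_N);L_\infty)$; (ii) substitute this into the right-hand side of \eqref{0-9} and sum the resulting (geometric) series; (iii) convert the resulting $\va$-entropy bound for $\Sigma_v^p(\Phi_N)$ back into a bound for $\e_k(\Sigma_v^p(\Phi_N),L_\infty)$.

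For step (i): since $\Sigma_{2v}^2(\Phi_N)\subset\Sigma_{v'}^2(\Phi_N)$ with $v':=\min\{2v,N\}\le 2v$ (recall $\Sigma_w(\Phi_N)=X_N$ for $w>N$), Theorem~\ref{XT5} gives $\e_k(\Sigma_{2v}^2(\Phi_N),L_\infty)\le B(2v/k)^{1/2}$ for all $k$, with $B=C(R_1,C_0)\log N$. Inverting this in the usual way yields $\cH_\delta(\Sigma_{2v}^2(\Phi_N);L_\infty)\le CB^2 v\,\delta^{-2}+1$ for every $\delta>0$. I would also use that $\cH_\delta(\Sigma_{2v}^2(\Phi_N);L_\infty)=0$ once $\delta\ge\rho_0:=\sup_{f\in\Sigma_{2v}^2(\Phi_N)}\|f\|_\infty$, because then the single ball $B_{L_\infty}(0,\delta)$, whose centre lies in the set, covers it; and arguing as in \eqref{X6}, $\|f\|_\infty\le\|f\|_A\le(2v)^{1/2}R_1^{-1}\|f\|_2$, so $\rho_0\le(2v)^{1/2}R_1^{-1}$.

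For steps (ii)--(iii): I would set $\delta=\delta_s:=2^{-3}a^{s-1}\va^{p/2}$ in \eqref{0-9}. Then the $s$-th summand is $\le Ca^{-2(s-1)}B^2v\va^{-p}+1$ and vanishes once $\delta_s\ge\rho_0$, i.e. once $s$ exceeds $1+\log_a(8\rho_0\va^{-p/2})$, so only $O\big(1+\max\{0,\log(v^{1/2}\va^{-p/2})\}\big)$ summands are nonzero; since $a\ge 2$ the factors $a^{-2(s-1)}$ are summable with sum a function of $p$ only. Adding the last term $\cH_{\va^{p/2}}(\Sigma_{2v}^2(\Phi_N);L_\infty)\le CB^2v\va^{-p}+1$, \eqref{0-9} gives
\[
\cH_\va(\Sigma_v^p(\Phi_N);L_\infty)\le C_1(p,R_1,C_0)(\log N)^2 v\,\va^{-p}+C_2(p,R_1,C_0)\big(1+\max\{0,\log(v^{1/2}\va^{-p/2})\}\big).
\]
Testing this at $\va:=C(\log N)^{2/p}(v/k)^{1/p}$: the first term equals $C_1C^{-p}k\le k/3$ for $C\ge(3C_1)^{1/p}$; since $\log N\ge1$ one has $v^{1/2}\va^{-p/2}\le C^{-p/2}k^{1/2}$, so for $C\ge 1$ the second term is $\le C_2(1+\tfrac12\log k)\le k/2$ for all $k\ge k_0(p,R_1,C_0)$; hence $\cH_\va(\Sigma_v^p(\Phi_N);L_\infty)\le k$, i.e. $\e_k(\Sigma_v^p(\Phi_N),L_\infty)\le\va$, for $k\ge k_0$. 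For the finitely many $k<k_0$ I would use monotonicity of entropy numbers together with the Nikol'skii-type bound $\e_k(\Sigma_v^p(\Phi_N),L_\infty)\le\e_1(\Sigma_v^p(\Phi_N),L_\infty)\le\sup_{f\in\Sigma_v^p(\Phi_N)}\|f\|_\infty\le R_1^{-2/p}v^{1/p}$ (from \eqref{X6} and $\|f\|_2\le\|f\|_\infty^{1-p/2}\|f\|_p^{p/2}$), which is $\le C(\log N)^{2/p}(v/k)^{1/p}$ once $C\ge R_1^{-2/p}k_0^{1/p}$. Taking $C=C(p,R_1,C_0)$ large enough to meet all these requirements would finish the proof; the bound holds for all $k$, in particular $k>N$, since \eqref{0-9} and the two inversions are valid for all $\va>0$.

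The point I expect to be the main obstacle is the bookkeeping in step (ii): each summand of \eqref{0-9} contributes a harmless $+1$, yet the sum is infinite, so one must control how many summands actually contribute, which is exactly where the collapse $\cH_\delta(\Sigma_{2v}^2(\Phi_N);L_\infty)=0$ for $\delta\ge\rho_0$ and the bound $\rho_0\lesssim v^{1/2}$ come in. Relatedly, inverting an estimate of the shape $\cH_\va\lesssim v\va^{-p}+\log(1/\va)$ into $\e_k\lesssim(v/k)^{1/p}$ works only once $k$ passes an absolute threshold, so the small-$k$ range must be treated separately via the trivial Nikol'skii bound on $\e_1$. The geometric summation itself is routine and uses only $a>1$, i.e. $p<2$ --- at the endpoint $p=2$ the statement reduces to Theorem~\ref{XT5}.
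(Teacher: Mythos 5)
Your proposal is correct and follows exactly the route the paper takes: Theorem~\ref{XT5} for $p=2$ combined with Lemma~\ref{YL1} at $q=\infty$, $\ta=p/2$ (the paper states this implication without writing out the details). The bookkeeping you supply --- inverting $\e_k$ into an $\va$-entropy bound, truncating the series in \eqref{0-9} via $\cH_\delta(\Sigma_{2v}^2(\Phi_N);L_\infty)=0$ for $\delta\ge\rho_0\lesssim v^{1/2}$, and handling small $k$ with the Nikol'skii-type bound on $\e_1$ --- is precisely what is needed to justify that one-line implication.
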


\section{Proof of Theorem \ref{IT3}}\label{Z}
 
   Theorem \ref{YT1}  provides bounds on the entropy numbers $ \e_k(\Sigma_v^p(\Phi_N),L_\infty)$ under additional assumption \eqref{X5a}. Thus, a combination of Theorem \ref{YT1} with Theorem \ref{XT2} implies the statement of Theorem \ref{IT3} under extra assumption (\ref{X5a}).    However,   \eqref{X5a} is not assumed in Theorem ~\ref{IT3}.  Below we  give a proof of Theorem ~\ref{IT3}.

 We need  the following lemma proved in \cite{DPSTT2}.

\begin{Lemma}\label{lem-4-1} 	\textnormal{\cite[Lemma 4.3] {DPSTT2}}  Let $1\leq p<\infty$  be a fixed number. Assume that   $X_N$ is  an $N$-dimensional subspace of $L_\infty(\Omega)$ satisfying  the following condition:  for  some  parameter $\beta >0$ and constant $K\ge 2$
	\begin{equation}\label{6-4}
	\|f\|_\infty \leq (KN)^{\frac \beta p} \|f\|_p,\   \   \forall f\in X_N.
	\end{equation}
	Let $\{\xi_j\}_{j=1}^\infty$ be a sequence of independent random points  selected uniformly from the probability space $(\Omega,\mu)$.
	Then   there exists a  positive   constant  $C_\beta$  depending only on $\beta$  such that for any   $0< \va\leq \frac 12$ and  any integer   \begin{equation}
	m\ge C_\beta K^\beta \va^{-2}( \log\frac 2\va) N^{\beta+1}\log N,\label{4-3-a}
	\end{equation}  the   inequality
	\begin{align}
	(1-\va) \|f\|_p^p\leq  \frac 1m \sum_{j=1}^m| f(\xi_j)|^p \leq (1+\va)\|f\|_p^p,
	\end{align}holds  with probability
	$ \ge 1-m^{-N/\log K}$.
	
\end{Lemma}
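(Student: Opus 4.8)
\medskip\noindent\textit{Proof plan.} The plan is to run the classical net--concentration--union-bound scheme for Marcinkiewicz-type inequalities. Both sides of the asserted inequality are homogeneous of degree $p$ in $f$, so (handling $f=0$ via \eqref{6-4}, which forces $f=0$) it suffices to prove that, with probability at least $1-m^{-N/\log K}$, one has $\bigl|\frac1m\sum_{j=1}^m|f(\xi_j)|^p-1\bigr|\le\va$ simultaneously for all $f$ in the unit sphere $S:=\{f\in X_N:\ \|f\|_p=1\}$. First I would replace $S$ by a sufficiently fine finite $L_\infty$-net contained in $S$; then control each fixed net element by Bernstein's inequality for bounded i.i.d.\ summands; then take a union bound over the net and transfer the uniform estimate back to all of $S$ by homogeneity.

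\smallskip\noindent\textit{The net.} By \eqref{6-4} we have $S\subset(KN)^{\beta/p}X_N^\infty$, hence $N_\eta(S,L_\infty)\le\bigl(1+2(KN)^{\beta/p}/\eta\bigr)^N$ for every $\eta>0$. I would take a maximal $\eta$-separated subset $\FF\subset S$, which is an $\eta$-net of $S$ in $L_\infty$, with the resolution $\eta\asymp_p\va\,(KN)^{-\beta(p-1)/p}$. The point of this choice is that the transfer from $\FF$ to $S$ is \emph{deterministic in the sample}: if $f\in S$, $g\in\FF$ and $\|f-g\|_\infty\le\eta$, then, since $\|f\|_\infty,\|g\|_\infty\le(KN)^{\beta/p}$ by \eqref{6-4}, the elementary inequality $\bigl|\,|a|^p-|b|^p\,\bigr|\le p\,|a-b|\max(|a|,|b|)^{p-1}$ yields, for every realization of the points $\xi_j$,
$$
\Bigl|\frac1m\sum_{j=1}^m|f(\xi_j)|^p-\frac1m\sum_{j=1}^m|g(\xi_j)|^p\Bigr|\le\frac\va4.
$$
With this $\eta$ the cardinality satisfies $\log_2|\FF|\le N\log_2\!\bigl(C(p)(KN)^\beta/\va\bigr)\lesssim_{\beta,p}N\bigl(\log(KN)+\log(1/\va)\bigr)$.

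\smallskip\noindent\textit{Concentration and union bound.} For a fixed $g\in\FF$ the random variables $Y_j:=|g(\xi_j)|^p$ are i.i.d.\ with $\bE Y_j=\|g\|_p^p=1$, with $0\le Y_j\le\|g\|_\infty^p\le(KN)^\beta=:M$, and with $\operatorname{Var}(Y_j)\le M\,\bE Y_j\le M$; Bernstein's inequality therefore gives $\bP\bigl(\bigl|\frac1m\sum_{j=1}^mY_j-1\bigr|>\va/2\bigr)\le 2\exp\!\bigl(-c\,m\va^2/(KN)^\beta\bigr)$ for an absolute constant $c>0$. Taking a union bound over $\FF$, with probability at least $1-2|\FF|\exp(-c\,m\va^2/(KN)^\beta)$ the estimate $\bigl|\frac1m\sum_j|g(\xi_j)|^p-1\bigr|\le\va/2$ holds for all $g\in\FF$ at once; combined with the deterministic transfer above this upgrades to $\bigl|\frac1m\sum_j|f(\xi_j)|^p-1\bigr|\le\va$ for all $f\in S$, hence by homogeneity to the claimed inequality for all $f\in X_N$. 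It remains to check that the hypothesis \eqref{4-3-a} forces $2|\FF|\exp(-c\,m\va^2/(KN)^\beta)\le m^{-N/\log K}$; after taking logarithms this reduces to $c\,m\va^2/(KN)^\beta\ge\log2+\log|\FF|+\tfrac{N}{\log K}\log m$, which one verifies by inserting the bound for $\log|\FF|$ from the previous step, using $\log m\lesssim_\beta\log(KN)+\log(1/\va)$ together with $\log K\ge\log2$, and absorbing the remaining lower-order logarithmic factors into $C_\beta$ (here one uses $\log N\ge1$ and $\log(2/\va)\ge\log4$).

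\smallskip\noindent\textit{Main obstacle.} The delicate point is precisely this last piece of bookkeeping. The deterministic transfer forces the net to be as fine as $\eta\asymp\va\,(KN)^{-\beta(p-1)/p}$, which makes $\log|\FF|$ of order $N\log(KN)$; one must then check that the Bernstein exponent $c\,m\va^2/(KN)^\beta$, evaluated at the prescribed threshold $m=C_\beta K^\beta\va^{-2}(\log\frac2\va)N^{\beta+1}\log N$, dominates both this net term and the extra term $\tfrac{N}{\log K}\log m$ demanded by the target success probability $1-m^{-N/\log K}$. This is where the specific shape of the threshold enters, and it also shows why the statement is most useful when $K$ and $p$ are controlled: in the applications of this lemma in the present paper ($K$ arising from the $L_\infty$--$L_q$ comparison on $v$-dimensional coordinate subspaces of a uniformly bounded Riesz basis, and $p\le 2$) both are bounded by absolute constants, so the constant $C_\beta$ may be taken to depend only on $\beta$.
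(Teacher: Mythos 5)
First, a point of comparison: the paper does not prove this lemma at all --- it is imported verbatim from \cite[Lemma 4.3]{DPSTT2} --- so there is no in-paper argument to measure your proof against. Judged on its own, your scheme (reduce to the unit $L_p$-sphere by homogeneity, take a fine $L_\infty$-net, apply Bernstein to each net element, union bound, deterministic transfer back to the sphere) is the natural first attack, and the individual ingredients --- the volumetric covering bound, the pointwise transfer via $\bigl||a|^p-|b|^p\bigr|\le p|a-b|\max(|a|,|b|)^{p-1}$, and the Bernstein step with $M=(KN)^\beta$ --- are each correct as written.

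The problem sits exactly where you place "the main obstacle" and then assert that "one verifies" it: the union bound does not close at the stated threshold uniformly in $K$ and $p$. Your transfer step forces $\eta\asymp \va\,p^{-1}(KN)^{-\beta(p-1)/p}$, so $\log|\FF|\asymp N\log\bigl(p(KN)^{\beta}/\va\bigr)$, which contains the terms $\beta N\log K$ and $N\log p$. On the other side, at $m=C_\beta K^\beta\va^{-2}(\log\frac2\va)N^{\beta+1}\log N$ the Bernstein exponent equals $c\,m\va^2/(KN)^\beta=c\,C_\beta(\log\frac2\va)N\log N$: the $K^\beta$ in the threshold cancels exactly against the $(KN)^\beta$ in the denominator and leaves no $\log K$ behind. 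The required inequality $c\,C_\beta(\log\frac2\va)N\log N\ge \beta N\log K+N\log p+\cdots$ therefore fails once $K$ exceeds a fixed power of $N$ (or once $p$ is large), no matter how large the constant $C_\beta=C(\beta)$ is taken; these are not lower-order logarithmic factors that can be absorbed. (The term $\frac{N}{\log K}\log m$ coming from the target success probability is harmless, as you say; it is the entropy of the net that breaks.) Your closing remark --- that in the applications $K$ and $p$ are bounded, so "the constant $C_\beta$ may be taken to depend only on $\beta$" --- in effect concedes this: what your argument proves is the lemma with a constant $C(\beta,K,p)$, which is enough for Section \ref{Z} of this paper but is not the stated result. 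To obtain the lemma as stated one needs a mechanism that never pays $N\log K$ in the entropy term; compare the level-set/multi-scale decomposition the paper itself uses in Section \ref{sec:5} (Lemma \ref{BL2} and the proof of Theorem \ref{thm-4-2}), where at each level $j$ the uniform bound $M_j=(1+a)^{pj}$ is only as large as the level itself and the nets are taken at resolution proportional to that level, so the products $M_j\log L_j$ stay controlled. A single net at the finest resolution cannot reproduce this.
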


For a set  $\Omega_m:=\{x_1,\cdots, x_m\}\subset \Omega$ and a function $f:\Omega_m\to \R$, we define $\|f\|_{L_\infty(\Omega_m)} :=\max_{1\leq j\leq m} |f(x_j)|$ and 
\[ \|f\|_{L_q(\Omega_m)}:=\Bigl(\frac 1m \sum_{j=1}^m |f(x_j)|^q\Bigr)^{\frac1q}\   \ \text{for $q<\infty$} .\]

Now we turn to the proof of   Theorem \ref{IT3}. Recall that  we do not assume   \eqref{X5a}.  First, since  the Riesz basis $\Phi_N:=\{\varphi_j\}_{j=1}^{N}$ is uniformly bounded by $1$ on $\Omega$, we have
by (\ref{Riesz}) 
	\[\|f\|_\infty \leq  R_1^{-1} N^{\frac12} \|f\|_2,\   \ \forall f\in X_N,\]
	which in turn implies that 
	\[\|f\|_\infty \leq  C(R_1) N^{\frac1p} \|f\|_p,\   \ \forall f\in X_N,\   \ 1\leq p\leq 2.\]
	Thus,  by Lemma \ref{lem-4-1} with $\beta=1$,
	there exists 
	a discrete set $\Omega_{m_1}:=\{\xi^1,\cdots, \xi^{m_1}\}\subset \Omega$  with 
	$$ 	   C^{-1}  N^{2}\log N\leq m_1\leq  C  N^{2}\log N $$
such that  for all $f\in X_N$, 
	\begin{align}\label{4-4}
	\frac 45 \|f\|_p^p\leq \|f\|_{L_p(\Omega_{m_1})}^p \leq \frac 65\|f\|_p^p  \  \ \text{and}\   \  	\frac 45 \|f\|_2^2\leq \|f\|_{L_2(\Omega_{m_1})}^2 \leq \frac 65\|f\|_2^2,
	\end{align}
where $C>1$ is an absolute constant. 	

Second,  we consider the discrete  norm $\|\cdot\|_{L_p(\Omega_{m_1})}$ instead of the norm $\|\cdot\|_{L_p(\Omega)}$.   By \eqref{4-4},   $\Phi_N$ is a uniformly bounded Riesz basis of the space $(X_N, \|\cdot\|_{L_2(\Omega_{m_1})})$, and moreover,
$$ \|f\|_{L_\infty(\Omega_{m_1})} \leq C(p, R_1, R_2)  v^{1/p} \|f\|_{L_p(\Omega_{m_1})},\   \  \forall f\in\Sigma_v(\Phi_N).$$
Since $\log m_1 \sim \log N$,   by the regular Nikolskii inequality for the norms $\ell_q^{m_1}$, $1\leq q\leq \infty$, we also have  
\[ \|f\|_{L_\infty(\Omega_{m_1})} \leq C \|f\|_{L_{\log N}(\Omega_{m_1})},\   \   \forall f\in X_N,\]
where $C>1$ is an absolute constant. 
Thus, by Theorem \ref{XT2} and  Theorem~\ref{YT1} applied to the discrete norm   $ \|\cdot\|_{L_p(\Omega_{m_1})}$,   we can find a subset $\Omega_m\subset \Omega_{m_1}$ with 
\[ m=|\Og_m| \leq C(p, R_1, R_2) v (\log N)^2 (\log (2v))^2\]
such that for any $f\in\Sigma_v(\Phi_N)$, 
\begin{equation}\label{4.5}
\frac 34 \|f\|_{L_p(\Og_{m_1})}^p \leq \|f\|_{L_p(\Og_{m})}^p \leq \frac 54 \|f\|_{L_p,(\Og_{m_1})}^p.
\end{equation}
Combining \eqref{4-4} with \eqref{4.5}, we obtain the stated result of Theorem \ref{IT3}.

	\section{A refined  version of the conditional theorem  }\label{sec:5}

	Let us first recall some notations. 
	  Let $(\Og, \mu)$ be a probability space.   For $1\leq p\leq \infty$, denote by  $L_p(\Og)$   the usual Lebesgue space $L_p$ defined with respect to the measure $\mu$ on $\Og$, and by  $\|\cdot\|_p$ the norm of $L_p(\Og)$. We also set  $$B_{L_p}:=\{f\in L_p(\Og):\  \ \|f\|_p\leq 1\},\   \ 1\leq p\leq \infty. $$

	In this section, we prove a refined version of the  conditional theorem for  sampling  discretization of  all integral norms $L_q$ of functions from a  more general    subset  $\CW\subset L_\infty$, which allows us to estimate  the number of  points needed for  the sampling discretization in terms of  an integral of the $\va$-entropy  $\mathcal{H}_\va (\CW, L_\infty)$, $\va>0$.

	\begin{Theorem}\label{thm-4-2}
		Let $1\leq p<\infty$, and let  $\CW$  be a set of uniformly bounded functions on $\Og$ with  $$ 
	1\leq 	R:=\sup_{f\in \CW}\sup_{x\in\Og}  |f(x)|<\infty.$$   Assume that  $\cH_{ t}(\CW,L_\infty)<\infty$ for every $t>0$, and 
		\begin{equation}\label{5.1b}
		 (\lambda\cdot \CW)\cap B_{L_p} \subset \CW\subset B_{L_p},\   \   \ \forall \lambda>0.
		\end{equation} 
			Then there exist positive constants $C_p, c_p$ depending only on $p$   such that for any  $\va\in (0, 1)$ and  any 
		integer 
		\begin{equation}\label{5-2-c}
		m\ge  C_p 	  \va^{-5 }\left(\int_{10^{-1}\va^{1/p}} ^{R} u^{\frac  p2-1}    \Bigl(\int_{  u}^{ R }\frac {\cH_{c_p \va t}(\CW,L_\infty)}t\, dt\Bigr)^{\frac 12} du\right)^2, 
		\end{equation}
		there exist  $m$ points $x_1,\cdots, x_m\in \Og$  such that for all $f\in  \CW$, 
		\begin{equation}\label{5.3b}
		(1-\va) \|f\|_p^p \leq \frac  1m \sum_{j=1}^m |f(x_j)|^p\leq (1+\va) \|f\|_p^p.
		\end{equation} 
	\end{Theorem}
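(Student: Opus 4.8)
\textbf{Proof plan for Theorem \ref{thm-4-2}.}

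The plan is to follow the chaining-plus-random-sampling strategy from \cite{VT159,DPSTT2}, but to organize it so that the ``budget'' spent at each scale of a multiscale decomposition of $\CW$ is tracked quantitatively, which is what produces the entropy-integral in \eqref{5-2-c}. First I would fix $\va\in(0,1)$ and choose a decreasing sequence of scales $t_0=R>t_1>t_2>\cdots$ with $t_j\downarrow 0$ (for instance a dyadic sequence $t_j\sim 2^{-j}R$, truncated once $t_j$ drops below a multiple of $\va^{1/p}$, say at level $J$ with $t_J\approx 10^{-1}\va^{1/p}$). At each scale $t_j$ pick a minimal $t_j$-net $\mathcal N_j:=\cN_{t_j}(\CW,L_\infty)$, so $\log_2|\mathcal N_j|=\cH_{t_j}(\CW,L_\infty)$, together with nearest-point maps $\pi_j\colon\CW\to\mathcal N_j$. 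Any $f\in\CW$ then has a telescoping representation $f=\pi_J(f)+\sum_{j=1}^{J}\big(\pi_j(f)-\pi_{j-1}(f)\big)+\big(f-\pi_J(f)\big)$, in which each increment $g_j:=\pi_j(f)-\pi_{j-1}(f)$ ranges over a set of cardinality at most $|\mathcal N_j|\cdot|\mathcal N_{j-1}|$ and satisfies $\|g_j\|_\infty\le t_j+t_{j-1}\le 3t_{j-1}$, while the tail $f-\pi_J(f)$ has $\|\cdot\|_\infty\le t_J$.

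Next I would draw $m$ i.i.d. uniform sample points $x_1,\dots,x_m$ and, for each fixed pair $(h,h')$ of net elements at consecutive scales, apply Bernstein's inequality to the mean-zero bounded random variables $|h+\delta|^p-|h'+\delta|^p$ type increments — more precisely, to control $\big|\frac1m\sum_j|f(x_j)|^p-\|f\|_p^p\big|$ one expands $|f|^p$ along the telescoping sum and uses the elementary inequality $\big||a+b|^p-|a|^p\big|\le p\,|b|\,(|a|+|b|)^{p-1}$ to bound each error term by the corresponding increment $g_j$. Bernstein gives a deviation bound of the form $\exp(-c\,m\,\delta^2/(\|g_j\|_\infty^{?}\cdots))$ for each fixed increment; multiplying the failure probability by the union-bound factor $|\mathcal N_j||\mathcal N_{j-1}|\le 2^{2\cH_{t_{j-1}}}$ and summing over $j$, one sees that it suffices to take, at scale $j$, a number of points proportional to $\va^{-2}\,t_{j-1}^{\,p-2}\cdot(\text{weight})\cdot\cH_{t_{j-1}}$ (the weights coming from splitting the total allowed error $\va$ among the $J$ scales); optimizing the split of $\va$ across scales by a Cauchy–Schwarz / $\ell_1$–$\ell_2$ argument turns the sum $\sum_j t_{j-1}^{p/2-1}\cH_{t_{j-1}}^{1/2}\cdot(\cdots)$ into precisely the squared integral $\big(\int_{\approx\va^{1/p}}^R u^{p/2-1}(\int_u^R \cH_{c_p\va t}/t\,dt)^{1/2}du\big)^2$ appearing in \eqref{5-2-c}, with the inner $t$-integral arising because each increment at scale $j$ can be re-decomposed and the failure probabilities at all finer scales accumulate. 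Finally, the tail term $f-\pi_J(f)$ has sup-norm $\le t_J\le 10^{-1}\va^{1/p}$, and the hypothesis \eqref{5.1b} (which says $\CW$ is ``cone-like'': scaling a function back into $B_{L_p}$ keeps it in $\CW$, and $\CW\subset B_{L_p}$) is exactly what lets us convert this uniform smallness into the bound $\|f-\pi_J(f)\|_p^p\le t_J^p\le 10^{-p}\va$, small compared to $\va\|f\|_p^p$ once $\|f\|_p$ is not too small — and the case of small $\|f\|_p$ is handled separately by the homogeneity built into \eqref{5.1b}, rescaling $f$ to the boundary of $B_{L_p}$. Combining the chained deviation bound on the main part with this crude bound on the tail yields \eqref{5.3b} on an event of positive probability, so the desired $m$ points exist.

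The main obstacle I expect is the bookkeeping in the optimization step: one must split the total tolerance $\va$ into scale-dependent pieces $\va_j$ with $\sum_j\va_j\lesssim\va$, carry the $t_{j-1}^{p-2}$ factor (which behaves very differently for $p<2$ and $p=2$) through Bernstein's variance and range bounds correctly, and recognize that the resulting discrete sum $\sum_j$ is a Riemann sum for the stated double integral rather than a single integral of $\cH$. The appearance of the nested integral $\int_u^R\cH/t\,dt$ inside a square root, and of the outer power $u^{p/2-1}$, is dictated by applying Cauchy–Schwarz at two different places (once to pass from $\ell_1$ control of the telescoped increments to $\ell_2$, once inside Bernstein to combine variance terms across the sub-scales used to estimate a single increment), so getting the constants $C_p,c_p$ and the exponent $\va^{-5}$ to come out consistently is the delicate part; the tail argument and the reduction to $\|f\|_p=1$ via \eqref{5.1b} are comparatively routine. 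I would also need the finiteness hypothesis $\cH_t(\CW,L_\infty)<\infty$ for all $t>0$ to guarantee the integrals converge and the nets are finite, and I would check that the lower endpoint $10^{-1}\va^{1/p}$ of the outer integral matches the truncation level $t_J$ chosen above.
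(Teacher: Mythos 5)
Your plan correctly identifies several ingredients that do appear in the paper's argument: the reduction to $\|f\|_p=1$ via \eqref{5.1b}, a multiscale family of $L_\infty$-nets, a Bernstein-type deviation bound combined with a union bound over net elements (this is exactly what Lemma~\ref{BL2} packages), the splitting of the tolerance $\va$ into scale-dependent pieces $\va_j$ optimized by Cauchy--Schwarz, and the truncation near the level $10^{-1}\va^{1/p}$. However, the core of your decomposition --- telescoping $f$ itself through nets at absolute dyadic precisions $t_j\sim 2^{-j}R$ and applying Bernstein to the increments of $|f|^p$ via $\bigl||a+b|^p-|a|^p\bigr|\le p\,|b|\,(|a|+|b|)^{p-1}$ --- is not the mechanism that produces the bound \eqref{5-2-c}, and I do not see how to make it do so. Concretely: (i) in your scheme the union bound at scale $j$ costs only $\cH_{t_j}+\cH_{t_{j-1}}$, a single entropy value, so there is no source for the nested inner integral $\int_u^R\cH_{c_p\va t}(\CW,L_\infty)\,dt/t$; your explanation that ``failure probabilities at all finer scales accumulate'' does not correspond to any concrete step in a chaining setup, where each increment is handled by one union bound over one pair of nets. (ii) Nothing in your construction puts the factor $\va$ inside the entropy argument $\cH_{c_p\va t}$. (iii) The variance proxy for the increment $|\pi_jf|^p-|\pi_{j-1}f|^p$ is of order $t_{j-1}^2$ for $1\le p\le 2$, so the optimization lands on a Dudley-type quantity $\va^{-2}\bigl(\int_0^R\cH_t^{1/2}\,dt\bigr)^2$; in the main application ($\cH_t\asymp v(B_1/t)^q$, $R\asymp v^{1/q}$) this is of order $v^{2/q}$, i.e.\ quadratic in $v$ when $q=1$, which is precisely the bound the paper is trying to beat.

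The missing idea is a layer-cake (level-set) decomposition rather than a chaining decomposition. The paper normalizes to $\CW_1=\{f/\|f\|_p\}$ (legitimate by \eqref{5.1b}, as you note), takes nets $\cA_j$ of $\CW_1$ at \emph{multiplicative} precision $a(1+a)^j$ with $a=c^*\va$, and partitions $\Omega$ into sets $D_j(f)$ on which $|f|\approx(1+a)^j=:u$; each $D_j(f)$ is determined by the net approximants $A_k(f)$ for \emph{all} $k\ge j$. It then replaces $|f|^p$ by the step function $h(f,\cdot)^p=\sum_j(1+a)^{pj}\chi_{D_j(f)}$ (accurate to $\va/4$ pointwise) and discretizes each family $\cF_j^p=\{(1+a)^{pj}\chi_{D_j(f)}\}$ with Lemma~\ref{BL2}: these functions have $L_1$ norm at most $2$ and sup norm $M_j=(1+a)^{pj}=u^p$, which is what yields the weight $u^{p/2-1}$ in the outer integral, while $\log|\cF_j^p|\le\sum_{k\ge j}\cH_{a(1+a)^k}\approx\va^{-1}\int_u^R\cH_{c_p\va t}\,dt/t$ (a product of net cardinalities at all coarser levels) is what yields the inner integral and the $\va$ inside $\cH$. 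Without this change of decomposition your plan, even if executed with careful bookkeeping, arrives at a genuinely different and generally weaker estimate than \eqref{5-2-c}.
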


	 In particular, Theorem \ref{thm-4-2}   allows us to prove   refined versions of  Theorem \ref{XT2} and  Theorem \ref{IT3},
	 where   the constants in the Marcinkiewicz type discretization are    replaced by $1-\va$ and $1+\va$ for an arbitrarily given $\va\in (0, 1)$.

	First,  we have the following refined version  of  Theorem \ref{XT2}.
	
	\begin{Corollary}\label{Cor5.1} Let $1\le q<\infty$ and $1\leq v\leq N$. Suppose that a dictionary $\D_N$ is such that 
		\be\label{Z1-5.4}
		\e_k(\Sigma_v^q(\D_N),L_\infty) \le  B_1 (v/k)^{1/q}, \quad k=1,2,\cdots,
		\ee
		where $B_1\ge 1$.
		Assume in addition that there exists a constant $B_2\ge 1$ such that 
		\begin{equation}\label{5-5b} 
		\|f\|_\infty \leq B_2 v^{1/q} \|f\|_q,\   \   \forall f\in \Sigma_v^q(\D_N).
		\end{equation}
		Then for  a large enough constant $C(q)$ and any $\va\in (0, 1)$,  there exist 
		$m$ points
		$\xi^1,\cdots, \xi^m\in  \Omega$ with
		$$
		m \le C(q)   \va^{-5-q} v B_1^{q}(\log (B_2v/\va))^2
		$$
		such that for any $f\in  \Sigma_v(\D_N)$
		$$
		(1-\va)\|f\|_q^q \le \frac{1}{m}\sum_{j=1}^m |f(\xi^j)|^q \le (1+\va) \|f\|_q^q.
		$$
	\end{Corollary}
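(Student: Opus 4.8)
The plan is to derive Corollary \ref{Cor5.1} directly from Theorem \ref{thm-4-2} by taking $\CW:=\Sigma_v^q(\D_N)$ and $p:=q$, and verifying that all hypotheses of the theorem hold with the right quantitative parameters. First I would check the normalization condition \eqref{5.1b}: the scaling invariance $(\lambda\cdot\CW)\cap B_{L_q}\subset\CW$ follows because $\lambda f\in\Sigma_v(\D_N)$ whenever $f\in\Sigma_v(\D_N)$ (the support is unchanged) and $\|\lambda f\|_q\le 1$ puts it in $\Sigma_v^q(\D_N)$; the inclusion $\CW\subset B_{L_q}$ is the definition of $\Sigma_v^q(\D_N)$. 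Next I would fix the value of $R$: by the remark following Theorem \ref{XT2} (inequality \eqref{2.3a}), the hypothesis \eqref{Z1-5.4} forces $\|f\|_\infty\le 3B_1v^{1/q}$ for all $f\in\Sigma_v^q(\D_N)$, so we may take $R=3B_1v^{1/q}$ (note $R\ge 1$ since $B_1\ge 1$, $v\ge 1$). The finiteness of $\cH_t(\CW,L_\infty)$ for all $t>0$ is immediate from \eqref{Z1-5.4}.

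The heart of the argument is then to bound the integral appearing in \eqref{5-2-c}. From \eqref{Z1-5.4}, the standard duality between entropy numbers and the $\va$-entropy gives $\cH_\va(\Sigma_v^q(\D_N),L_\infty)\le C(q)\,v\,(B_1/\va)^q$ for all $\va>0$ — more precisely $\cH_\va\le k$ whenever $B_1(v/k)^{1/q}\le\va$, i.e. whenever $k\ge B_1^q v\va^{-q}$, so $\cH_\va\le B_1^q v\va^{-q}$ (up to rounding, absorbed into the constant). Plugging $\cH_{c_q\va t}(\CW,L_\infty)\le C(q)B_1^q v (c_q\va t)^{-q}$ into the inner integral $\int_u^R t^{-1}\cH_{c_q\va t}\,dt$, I would split: the entropy bound $B_1^q v(c_q\va)^{-q}t^{-q}$ is useful for small $t$, while the trivial bound $\cH_{c_q\va t}\le\cH_{c_q\va u}$ combined with the fact that $R$ is only polynomially large in $v/\va$ handles the logarithmic factor. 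The key point is that $\int_u^R t^{-1}\cdot t^{-q}\,dt\asymp u^{-q}$ for $q>0$, so the inner integral is $\lesssim B_1^q v(c_q\va)^{-q}u^{-q}$, hence its square root is $\lesssim B_1^{q/2}v^{1/2}(c_q\va)^{-q/2}u^{-q/2}$. Then the outer integral becomes
\[
\int_{10^{-1}\va^{1/q}}^{R} u^{q/2-1}\cdot B_1^{q/2}v^{1/2}(c_q\va)^{-q/2}u^{-q/2}\,du
= B_1^{q/2}v^{1/2}(c_q\va)^{-q/2}\int_{10^{-1}\va^{1/q}}^{R}\frac{du}{u},
\]
and $\int_{10^{-1}\va^{1/q}}^{R}du/u=\log(R\cdot 10\va^{-1/q})\lesssim\log(B_1 v^{1/q}/\va)\lesssim\log(B_2 v/\va)$ using $R=3B_1v^{1/q}$ and $B_1\le 3B_1\le$ (a constant times) $B_2v^{1/q}\cdot$ stuff — in fact one should use \eqref{5-5b} to relate $B_1$ to $B_2$, or simply observe $\log(B_1v/\va)\lesssim\log(B_2v/\va)$ is automatic when $B_1\lesssim B_2 v$ which follows from \eqref{2.3a} and \eqref{5-5b}. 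Squaring, multiplying by $C_q\va^{-5}$, and collecting exponents of $\va$ gives the bound $m\le C(q)\va^{-5-q}vB_1^q(\log(B_2v/\va))^2$ in \eqref{5-2-c}, which is exactly the claimed bound.

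I would then simply invoke Theorem \ref{thm-4-2} with $\va$ as given to produce the $m$ points satisfying \eqref{5.3b}, which is precisely the conclusion $(1-\va)\|f\|_q^q\le\frac1m\sum_j|f(\xi^j)|^q\le(1+\va)\|f\|_q^q$. The main obstacle is the bookkeeping in the double integral estimate: one must be careful to split the inner $t$-integral at the right place so that the $u$-dependence comes out as exactly $u^{-q/2}$ (matching the $u^{q/2-1}$ prefactor to leave behind only $du/u$ and hence a single logarithm, not a power of $R$), and one must track how the constant $c_q$ inside the entropy argument interacts with the powers of $\va$ so that the final $\va$-exponent is $-5-q$ rather than something worse. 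A secondary subtlety is confirming that $R=3B_1v^{1/q}$ may legitimately replace the possibly-smaller $B_2v^{1/q}\|f\|_q$-type bound in \eqref{5-5b}: the role of \eqref{5-5b} here is only to ensure $\log(B_2v/\va)$ is the right logarithmic scale, and since $B_2\le 3B_1$ always holds by \eqref{2.3a} (with $B_2=1$ available in the orthonormal case), both $\log R$ and the final constant can be expressed through $B_2$ as stated. Everything else is a direct substitution into Theorem \ref{thm-4-2}.
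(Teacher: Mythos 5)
Your proposal follows the paper's proof essentially verbatim: take $\CW=\Sigma_v^q(\D_N)$, $p=q$ in Theorem \ref{thm-4-2}, convert \eqref{Z1-5.4} into the entropy bound $\cH_t(\CW,L_\infty)\le C(q)v(B_1/t)^q$, and evaluate the double integral in \eqref{5-2-c}, where the inner integral gives $u^{-q/2}$ after the square root and the outer integral collapses to a single logarithm. The one slip is your handling of $R$: since $R$ is \emph{defined} in Theorem \ref{thm-4-2} as $\sup_{f\in\CW}\|f\|_\infty$, hypothesis \eqref{5-5b} directly gives $R\le B_2v^{1/q}$ and you should use that (as the paper does), rather than $3B_1v^{1/q}$ followed by the claim that $B_1\lesssim B_2v$ ``follows from \eqref{2.3a} and \eqref{5-5b}'' --- it does not, since \eqref{2.3a} only shows one \emph{may choose} $B_2=3B_1$, while $B_1$ itself can be arbitrarily large relative to any admissible $B_2$. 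With $R\le B_2v^{1/q}$ taken at the outset the logarithm comes out as $\log(B_2v/\va)$ with no further argument, and the rest of your computation is exactly the paper's.
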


\begin{proof} [Proof of Corollary \ref{Cor5.1}]
 	We  apply Theorem \ref{thm-4-2}  to  $p=q$, $\CW:=\Sigma_v^q(\D_N)$
 and $R=B_2 v^{1/q}$.	It is clear that  $\CW$ satisfies \eqref{5.1b} with $p=q$. Furthermore,  \eqref{Z1-5.4} implies that (see by \cite[Lemma  2.1]{DPSTT2}) 
	\begin{equation}\label{5.4}
	\cH_{t} ( \CW, L_\infty) \leq C(q)  v\cdot  (B_1/t)^q,\    \  t>0.
	\end{equation}
Finally, a straightforward calculation using \eqref{5.4} then shows that 
\begin{align*}
&\va^{-5}  \left(\int_{10^{-1}\va^{1/q}} ^{B_2 v^{1/q}} u^{\frac  q2-1}    \Bigl(\int_{  u}^{ B_2 v^{1/q} }\frac{\cH_{c_q \va t}(\CW,L_\infty)} t \, dt\Bigr)^{\frac 12} du\right)^2\\
&\leq   C (q) \va^{-5-q}  B_1^q v (\log (B_2 v/\va))^2.
\end{align*}	
 Corollary \ref{Cor5.1} then follows from  Theorem \ref{thm-4-2}. 	 
\end{proof}

Using Corollary \ref{Cor5.1} and following the proof in Section \ref{Z}, we can also obtain the    $\va$-version of Theorem \ref{IT3}. 	
	
	 \begin{Corollary}\label{Cor5.2} Let  $\Phi_N$ be  a uniformly bounded Riesz basis of $X_N:=\mspan(\Phi_N)\subset L_2(\Og)$ satisfying \eqref{Riesz} for some constants $0<R_1\leq R_2$. 
	  Let $1\le p\le 2$, and let $1\leq v\leq N$ be an integer.  	Then for a large enough constant $C=C(p,R_1,R_2)$,  and any $\va\in (0, 1)$,  there exist 
		$m$ points
		$\xi^1,\cdots, \xi^m\in  \Omega$ with
		$$
		m \le C\va^{-p-5} v(\log N)^2(\log(2v\va^{-1}))^2
		$$
		such that for any $f\in  \Sigma_v(\Phi_N)$
		we have
		\begin{equation*}
		(1-\va)\|f\|_p^p \le \frac{1}{m}\sum_{j=1}^m |f(\xi^j)|^p \le (1+\va)\|f\|_p^p.
		\end{equation*}
	\end{Corollary}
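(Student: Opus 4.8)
The plan is to transcribe the proof of Theorem~\ref{IT3} from Section~\ref{Z} almost verbatim, replacing the fixed numbers $\frac12,\frac32,\frac34,\frac54$ throughout by $1\pm c\va$ for a small absolute constant $c$ (say $c=\frac13$), and replacing the conditional Theorem~\ref{XT2} by its refined form Corollary~\ref{Cor5.1}. As in Section~\ref{Z}, condition \eqref{X5a} is \emph{not} assumed, and the strategy is: first discretize down to a (large, but polynomial in $N$ and $\va^{-1}$) point set $\Omega_{m_1}$, then thin it to the required small set $\Omega_m\subset\Omega_{m_1}$ via Corollary~\ref{Cor5.1}. \emph{Stage 1.} Uniform boundedness of $\Phi_N$ together with \eqref{Riesz} gives $\|f\|_\infty\le R_1^{-1}N^{1/2}\|f\|_2$, and hence $\|f\|_\infty\le C(R_1)N^{1/p}\|f\|_p$ for all $f\in X_N$, $1\le p\le 2$. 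Applying Lemma~\ref{lem-4-1} with $\beta=1$ (so its constant $K$ depends only on $p,R_1$) and accuracy parameter $c\va$, once for the exponent $p$ and once for the exponent $2$, and taking a union bound over the two exceptional events, I would obtain a single set $\Omega_{m_1}=\{\xi^1,\dots,\xi^{m_1}\}$ with $m_1\le C(p,R_1)\va^{-2}\log(2\va^{-1})N^2\log N$ such that, for all $f\in X_N$,
$$(1-c\va)\|f\|_p^p\le\|f\|_{L_p(\Omega_{m_1})}^p\le(1+c\va)\|f\|_p^p$$
and, simultaneously, $(1-c\va)\|f\|_2^2\le\|f\|_{L_2(\Omega_{m_1})}^2\le(1+c\va)\|f\|_2^2$ (the latter ensures that $\Phi_N$ remains a uniformly bounded Riesz basis of $X_N$ on $\Omega_{m_1}$).

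\emph{Stage 2: verifying the hypotheses of Corollary~\ref{Cor5.1} for the discrete norms $\|\cdot\|_{L_q(\Omega_{m_1})}$.} By the second pair of inequalities in Stage~1, $\Phi_N$ is a uniformly bounded Riesz basis of $X_N$ equipped with $\|\cdot\|_{L_2(\Omega_{m_1})}$, with constants comparable to $R_1,R_2$. Combining this with $\|\varphi_j\|_\infty\le 1$ and the elementary bound $\|f\|_{L_2(\Omega_{m_1})}^2\le\|f\|_{L_\infty(\Omega_{m_1})}^{2-p}\|f\|_{L_p(\Omega_{m_1})}^p$ gives $\|f\|_{L_\infty(\Omega_{m_1})}\le C(p,R_1,R_2)v^{1/p}\|f\|_{L_p(\Omega_{m_1})}$ for $f\in\Sigma_v(\Phi_N)$, the discrete analog of \eqref{5-5b} with $B_2=C(p,R_1,R_2)$. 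Since $\log m_1\sim\log N$, the Nikolskii inequality $\|a\|_{\ell_\infty^{m_1}}\le m_1^{1/q}\|a\|_{\ell_q^{m_1}}$ at $q=\log N$ gives $\|f\|_{L_\infty(\Omega_{m_1})}\le C\|f\|_{L_{\log N}(\Omega_{m_1})}$ for all $f\in X_N$, the discrete analog of \eqref{X5a}. Hence Theorem~\ref{YT1}, applied to the discrete norm $\|\cdot\|_{L_p(\Omega_{m_1})}$, yields $\e_k(\Sigma_v^p(\Phi_N),L_\infty(\Omega_{m_1}))\le C(p,R_1,R_2)(\log N)^{2/p}(v/k)^{1/p}$ for all $k$, i.e. the discrete analog of \eqref{Z1-5.4} holds with $B_1=C(p,R_1,R_2)(\log N)^{2/p}$, so $B_1^p\le C(p,R_1,R_2)(\log N)^2$.

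\emph{Stage 3: applying Corollary~\ref{Cor5.1} and descending.} I would now apply Corollary~\ref{Cor5.1} to the dictionary $\Phi_N$ over the discrete norm $\|\cdot\|_{L_p(\Omega_{m_1})}$, with $q=p$ and accuracy parameter $c\va$, to obtain a subset $\Omega_m\subset\Omega_{m_1}$ with
$$m\le C(p)(c\va)^{-5-p}\,v\,B_1^p\,(\log(B_2 v/(c\va)))^2\le C(p,R_1,R_2)\,\va^{-p-5}\,v\,(\log N)^2\,(\log(2v\va^{-1}))^2$$
such that $(1-c\va)\|f\|_{L_p(\Omega_{m_1})}^p\le\|f\|_{L_p(\Omega_m)}^p\le(1+c\va)\|f\|_{L_p(\Omega_{m_1})}^p$ for all $f\in\Sigma_v(\Phi_N)$. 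Chaining this with the $L_p$-estimate of Stage~1 and using $(1-c\va)^2\ge 1-\va$ and $(1+c\va)^2\le 1+\va$ (valid for $c=\frac13$ and $\va\in(0,1)$) gives exactly the conclusion of Corollary~\ref{Cor5.2}.

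The one point I expect to require real care is the $\va$-bookkeeping in Stages~2--3, and in particular keeping the constant $C_0=m_1^{1/\log N}$ in the discrete version of \eqref{X5a} under control, since it feeds into the final estimate through $B_1^p$. From $m_1\le C(p,R_1)\va^{-2}\log(2\va^{-1})N^2\log N$ one has $\log m_1\le C(\log N+\log\va^{-1})$, so $C_0$ is bounded by an absolute constant whenever $\va$ is bounded below by a fixed negative power of $N$; in this (non-pathological) range the stated bound on $m$ holds as claimed, with any residual $\va$-dependence absorbed into the explicit $\va^{-p-5}(\log(2v\va^{-1}))^2$. Apart from this, the argument is a line-by-line rerun of Section~\ref{Z}, with Corollary~\ref{Cor5.1} in place of Theorem~\ref{XT2} and $1\pm c\va$ in place of the fixed constants.
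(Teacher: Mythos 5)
Your proposal is correct and is precisely the paper's intended argument: the paper's entire proof of Corollary~\ref{Cor5.2} is the remark that one follows the proof of Theorem~\ref{IT3} from Section~\ref{Z} with Corollary~\ref{Cor5.1} substituted for Theorem~\ref{XT2}, which is exactly what you carry out (with the correct $\va$-bookkeeping that the paper leaves implicit). The one loose end you flag --- the regime $\va \le N^{-c}$, where the discrete Nikolskii constant $m_1^{1/\log N}$ on $\Omega_{m_1}$ is no longer absolutely bounded --- closes immediately, since there $m_1\le C\va^{-2}\log(2\va^{-1})N^2\log N\le C\va^{-p-5}v(\log N)^2(\log(2v\va^{-1}))^2$, so Lemma~\ref{lem-4-1} applied with accuracy $\va$ already produces a set $\Omega_{m_1}$ satisfying the conclusion without any Stage~2 or Stage~3.
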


	The rest of this section is devoted to the proof of Theorem \ref{thm-4-2}, which  is  close to the proof of Theorem~\ref{XT1} from Section \ref{X} given   in \cite{DPSTT1}. 
	We need  the following  lemma:

	\begin{Lemma}\label{BL2} \textnormal{\cite[Lemma 2.4]{DPSTT1}} Let $\{\cF_j\}_{j\in G}$ be a collection of finite sets of {bounded}
		functions from $L_1(\Omega,\mu)$. Assume that for each $j\in G$ and all $f\in \cF_j$ we have
		$$
		\|f\|_1 \le 1,\quad \|f\|_\infty:= \sup_{x\in\Omega}|f(x)| \le M_j.
		$$
		Suppose that positive numbers $\eta_j$ {$\in (0,1)$} and a natural number $m$ satisfy the condition
		$$
		2\sum_{j\in G} |\cF_j| \exp \left(-\frac{m\eta_j^2}{8M_j}\right) <1.
		$$
		Then there exists a set $\xi=\{\xi^\nu\}_{\nu=1}^m \subset \Omega$ such that for each $j\in G$ and for all $f\in \cF_j$ we have
		$$
		\left|\|f\|_1 - \frac{1}{m}\sum_{\nu=1}^m |f(\xi^\nu)|\right| \le \eta_j.
		$$
	\end{Lemma}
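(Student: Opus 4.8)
\emph{Proof plan.} The natural route is the probabilistic method. The plan is to choose $\xi^1,\dots,\xi^m$ to be independent random points of $\Omega$, each distributed according to $\mu$, and to show that under the stated hypothesis the ``bad'' event --- that for some $j\in G$ and some $f\in\cF_j$ one has $\bigl|\,\|f\|_1-\frac1m\sum_{\nu=1}^m|f(\xi^\nu)|\,\bigr|>\eta_j$ --- has probability strictly less than $1$. Any realization of $\{\xi^\nu\}_{\nu=1}^m$ for which no bad event occurs is then the desired set $\xi$, so it suffices to control the probability of a single bad event and apply a union bound.

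For the single-event estimate, fix $j\in G$ and $f\in\cF_j$ and put $Y_\nu:=|f(\xi^\nu)|$, $\nu=1,\dots,m$. These are i.i.d.\ random variables with $0\le Y_\nu\le M_j$ and $\mathbb{E}Y_\nu=\int_\Omega|f|\,d\mu=\|f\|_1\le 1$, and hence $\mathbb{E}Y_\nu^2\le M_j\,\mathbb{E}Y_\nu\le M_j$. The key step is a Bernstein/Bennett-type tail bound for $\frac1m\sum_\nu Y_\nu$: by convexity of $t\mapsto e^{\pm\lambda t}$ on $[0,M_j]$ one gets, for $\lambda>0$,
\[
\mathbb{E}\,e^{\pm\lambda(Y_\nu-\|f\|_1)}\le\exp\!\Bigl(\tfrac{\|f\|_1}{M_j}\bigl(e^{\pm\lambda M_j}\mp\lambda M_j-1\bigr)\Bigr)\le\exp\!\Bigl(\tfrac1{M_j}\bigl(e^{\pm\lambda M_j}\mp\lambda M_j-1\bigr)\Bigr),
\]
where it is crucial that $\|f\|_1\le 1$, so that the exponent carries the factor $1/M_j$ rather than the $1/M_j^2$ that the cruder Hoeffding bound would give. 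Chernoff's bound, optimization in $\lambda$ (choosing $\lambda M_j=\ln(1+\eta_j)$ for the upper tail and $\lambda M_j=-\ln(1-\eta_j)$ for the lower tail), together with the elementary inequalities $(1+\eta)\ln(1+\eta)-\eta\ge\tfrac38\eta^2$ and $(1-\eta)\ln(1-\eta)+\eta\ge\tfrac12\eta^2$ valid for $\eta\in(0,1)$, then yield
\[
\mathbb{P}\!\left(\Bigl|\tfrac1m{\textstyle\sum_{\nu=1}^m}|f(\xi^\nu)|-\|f\|_1\Bigr|>\eta_j\right)\le 2\exp\!\left(-\frac{m\eta_j^2}{8M_j}\right).
\]
(Alternatively one may simply invoke a standard Bernstein inequality with the variance bound $\mathbb{E}Y_\nu^2\le M_j$.)

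Finally, a union bound over the at most $\sum_{j\in G}|\cF_j|$ pairs $(j,f)$ with $f\in\cF_j$ shows that the probability that \emph{some} bad event occurs is at most $2\sum_{j\in G}|\cF_j|\exp\!\bigl(-m\eta_j^2/(8M_j)\bigr)$, which is $<1$ by the hypothesis of the lemma. Hence with positive probability no bad event occurs, and such a realization gives the required set $\xi=\{\xi^\nu\}_{\nu=1}^m$. There is no genuine obstacle in this argument; the only point needing care is to use the Bernstein/Bennett inequality (exploiting the variance bound coming from $\|f\|_1\le1$) instead of Hoeffding's inequality, since it is precisely the linear, rather than quadratic, dependence on $M_j$ in the exponent that makes the quantitative hypothesis on $m$ the sharp one.
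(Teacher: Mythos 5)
Your proposal is correct, and it follows the standard route: the paper itself does not prove this lemma (it is quoted from \cite[Lemma 2.4]{DPSTT1}), but the argument given there is exactly yours --- independent $\mu$-distributed sample points, a Bernstein/Bennett--Chernoff tail bound that exploits $\mathbb{E}Y_\nu^2\le M_j\|f\|_1\le M_j$ to get the linear (not quadratic) dependence on $M_j$ in the exponent, and a union bound over all pairs $(j,f)$ with $f\in\cF_j$. All your quantitative steps (the optimal choices of $\lambda$ and the elementary bounds $(1+\eta)\ln(1+\eta)-\eta\ge\tfrac38\eta^2$, $(1-\eta)\ln(1-\eta)+\eta\ge\tfrac12\eta^2$ on $(0,1)$) check out and do yield the stated constant $8$.
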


%
%
	
	\begin{proof} [Proof of Theorem \ref{thm-4-2}]
		Let 
		\[\CW_1:= \{  f/ {\|f\|_p}:\  \ f\in\CW\setminus\{0\}\}.\]
		Clearly, $\CW_1\subset \CW$, and it suffices to prove \eqref{5.3b} 
		 for all $f\in\CW_1$. 
		Let $c^\ast=c_p^\ast\in (0, \frac 12)$ be a  sufficiently small  constant depending only on $p$.
		Let $a:=c^\ast\va$. Let  $J, j_0$ be two integers  such that $j_0<0\leq J$, 
		\begin{equation}\label{5.8}
		(1+a)^{J-1}\le R < (1+a)^J\   \ \text{and}\   \ (1+a)^{j_0 p} \leq \frac  15 \va  \leq (1+a)^{(j_0+1) p }. 
		\end{equation}
		For $j\in\ZZ$, let 
		$$
		\cA_j := \cN_{a(1+a)^j}(\CW_1,L_\infty)\subset \CW_1
		$$
		denote the minimal $a(1+a)^j$-net of $\CW_1$ in the norm of $L_\infty$. For $j\in\ZZ$ and  $f\in\CW_1$, we  define  $A_j(f)$  to be  the function in  $ {\cA}_j$ that is  closest to $f$ in the $L_\infty$ norm. 
		Thus, $\|A_j (f)-f\|_\infty\leq a(1+a)^j$ for all $f\in\CW_1$ and $j\in\ZZ$. 
		
		Next, for $f\in \CW_1$ and $j> j_0$, define 
		$$
		U_j(f) := \{\bx\in\Og : |A_j(f)(\bx)| \ge (1+a)^{j-1}\},
		$$
		and 
		$$
		D_j(f) := U_j(f) \setminus \bigcup_{k\ge j+1} U_k(f).$$
		We also set 
		$$
		D_{j_0}(f) := \Omega \setminus \bigcup_{k>j_0} U_k(f).
		$$
		Note that by \eqref{5.8}, $U_j(f)=\emptyset$ for $j> J$. 
		Thus,  $\{D_{j}(f):\  \ j=j_0,\cdots, J\}$ forms a partition of the domain $\Og$. Define
		\begin{equation}\label{4-3-0}
		h(f,\bx) := \sum_{j=j_0+1}^{J} (1+a)^j \chi_{D_j(f)}(\bx),
		\end{equation}
		where $\chi_E(\bx)$ is a characteristic function of a set $E$.

		For  $\bx\in D_{j_0} (f)$, we have 
		\begin{align*}
		|f(\bx)|&\leq |A_{j_0+1} f(\bx)| + a(1+a)^{j_0+1} \leq (1+a)^{j_0} + a(1+a)^{j_0+1}\\
		&\leq (1+a)^{j_0} (1+2a),
		\end{align*}
		which in turn implies that 
		\[ |f(\bx)|^p \leq  (1+a)^{j_0p} (1+2a)^p\leq (1+a)^{j_0p} (1+C_p a) \leq \frac 54 (1+a)^{j_0 p}<\frac  14 \va. \]
		On the other hand, for $f\in D_j(f)$ and $j_0<j\leq J$, we have
		\begin{align*}
		|f(\bx)|&\ge |A_j f (\bx)| -a(1+a)^j  \ge (1+a)^j (1-2a),\   \ \text{and}\\
		|f(\bx)|&\leq |A_{j+1} f(\bx)|+a(1+a)^{j+1}  \leq (1+a)^j (1+2a),
		\end{align*}
		which  implies 
		\[\Bigl| |f(\bx)|^p -(1+a)^{jp}\Bigr| \leq C_p a \leq c^\ast C_p \va<\frac  \va 4.  \]
		Therefore, 
		\begin{equation}\label{4-5b}
		\Bigl| |f(\bx)|^p -|h(f, \bx)|^p \Bigr|\leq \frac  \va 4,\   \ \forall \bx\in\Og.
		\end{equation}

		For $j_0+1\leq j\leq J$,  let 
		$$
		\cF_j^p := \left\{ (1+a)^{pj}\chi_{D_j(f)}:\   f\in  \CW_1\right\}.
		$$
		
		
		Our aim is to find  $m$ points $\xi^1, \cdots, \xi^m\in\Og$  for each $m$ satisfying \eqref{5-2-c}
		 so that the following inequality holds for all $f\in\cF_j^p$ and $ j_0<j\leq  J$:
		\begin{align}
		&\Bigl| \frac  1m \sum_{k=1}^m f(\xi^k) -\int_{\Og} f(x) \, d\mu(x) \Bigr|\leq \va_j, \label{4-5}
		\end{align} 
		where $\{\va_j\}_{j=j_0+1}^{J} \subset (0, 1)$ satisfies $\sum_{j=j_0+1}^{J} \va_j \leq \va/4$. 
		Once \eqref{4-5} is proved, we obtain by \eqref{4-3-0} that 
		\begin{align}
		\Bigl| \frac  1 m \sum_{j=1}^m |h(f, \xi^j)|^p-\|h(f)\|_p^p\Bigr|  \leq \frac  \va 4.
		\end{align}
		which, applying \eqref{4-5b}, will prove the desired inequality \eqref{5.3b}.

		 To see this, we apply Lemma \ref{BL2}  for the  collection of the above sets $\cF_j^p$, and notice  that 
		for $j_0<j\leq J$, 
		$$
		\|(1+a)^{pj}\chi_{D_j(f)}\|_1 \le \|h(f)\|_p^p\leq  \|f\|_p^p +\frac  \va4\le 2
		$$
		and 
		$$
		\|(1+a)^{pj}\chi_{D_j(f)}\|_\infty \le (1+a)^{pj} =: M_j.
		$$
		Thus,  by Lemma \ref{BL2}, 
	  it suffices  to show that  for   each integer $m$ satisfying \eqref{5-2-c},  one can find  a sequence   $\{\va_j\}_{j_0<j\leq J}\subset (0, 1)$ such that  
		\begin{align}
		&\sum_{j_0<j\leq J} \va_j \leq \frac  \va4\label{4-8}\\
		&\sum_{j=j_0+1}^{J} |\cF_j^p| \exp \Bigl( - \frac  {m\va_j^2} {8M_j} \Bigr)<\frac 12.\label{4-9}
		\end{align}

		To this end, we need to estimate the cardinalities of the sets $\FF_j^p$.   By definition, for each  $j_0< j\leq  J$,  the set $D_j(f)$ is uniquely determined by the functions $A_k(f)\in\cA_k$, $j\leq k\leq J$. As a result, we have 
		$$
		|\cF_j^p| \le |\cA_j|\times\cdots\times|\cA_{J}|=:L_j,
		$$
		and 
		\begin{align}
		\log L_j&\leq  \sum_{k=j}^{J} \log |\cA_k|\leq \sum_{k=j}^{J} \cH_{a(1+a)^k}(\CW,L_\infty)\notag\\
		&\leq \frac  1 {\log (1+a)} \sum_{k=j}^{J} \int_{ a( 1+a)^{k-1}}^{a(1+a)^{k}}\cH_{t}(\CW,L_\infty)\frac  {dt}t\notag\\
		&\leq C\va^{-1}  \int_{ ( 1+a)^{j-1}}^{R }\cH_{at}(\CW,L_\infty)\frac  {dt}t.\label{4-10}
		\end{align}

		For each $j_0<j\leq J$, we choose $\va_j>0$   so that 
		\begin{align}
		&\log (\lambda L_j) = \frac  {m\va_j^2}{16 M_j},\   \    \  \text{that is}\   \  \va_j:=4\sqrt{M_j} \sqrt{\log (\lambda  L_j)} m^{-\frac 12},\label{4-11a}
		\end{align}
		where $\lambda>1$ is a large  absolute constant to be specified later. 
		Then 
		\begin{align*}
		\sum_{j=j_0+1}^{J}\va_j &=4m^{-1/2} \sum_{j=j_0+1}^{J} (M_j \log (\lambda L_j))^{\frac 12}\\
		&\leq 4 m^{-1/2} \sqrt{\log \lambda }\sum_{j=j_0+1}^{J} (M_j \log ( L_j))^{\frac 12} ,
		\end{align*}
		and hence \eqref{4-8} is ensured once 
		\begin{equation}\label{4-11}
		m\ge  \va^{-2} \Bigl(\sqrt{\log \lambda } \sum_{j=j_0+1}^{J} (M_j \log (L_j))^{\frac 12}\Bigr)^2.
		\end{equation}

		However, using \eqref{4-10}, we have
		\begin{align*}
		\sum_{j=j_0+1}^{J} (M_j \log L_j)^{\frac 12}&\leq C\va^{-\frac 12}\sum_{j=j_0+1}^{J}  (1+a)^{pj/2}  \Bigl(\int_{ ( 1+a)^{j-1}}^{R }\cH_{at}(\CW,L_\infty)\frac  {dt}t\Bigr)^{\frac 12}\\
		&\leq C_p \va^{-\frac 32}\sum_{j=j_0+1}^{J} \int_{(1+a)^{j-2}} ^{(1+a)^{j-1}} u^{\frac  p2-1}    \Bigl(\int_{  u}^{ R }\cH_{at}(\CW,L_\infty)\frac  {dt}t\Bigr)^{\frac 12} du \\
		&\leq C_p \va^{-\frac 32}\int_{10^{-1}\va^{1/p}} ^{R} u^{\frac  p2-1}    \Bigl(\int_{  u}^{ R }\cH_{c_p \va t}(\CW,L_\infty)\frac  {dt}t\Bigr)^{\frac 12} du.
		\end{align*}
		This combined with \eqref{4-11} implies that  \eqref{4-8} is ensured by  \eqref{5-2-c}.  
		
		Finally, we prove \eqref{4-9}. Indeed, using \eqref{4-11a}, we have
		\begin{align*}
		&\sum_{j=j_0+1}^{J} |\cF_j^p| \exp \Bigl( - \frac  {m\va_j^2} {8M_j} \Bigr)\leq 
		\lambda \sum_{j=j_0+1}^{J}  L_j \exp \Bigl( - \frac  {m\va_j^2} {8M_j} \Bigr)=\sum_{j=j_0+1}^{J}  \exp \Bigl(\log (\lambda L_j)  - \frac  {m\va_j^2} {8M_j} \Bigr)\\
		&=  \sum_{j=j_0+1}^{J}  \exp \Bigl( - \log (\lambda L_j) \Bigr)=\frac  1\lambda \sum_{j=j_0+1}^{J} \frac  1 {L_j}\leq \frac  1 {\lambda}  \sum_{j=j_0+1}^{J} \frac  1 {N_{a(1+a)^j}(\CW, L_\infty)},
		\end{align*}
		where the last step uses the fact that 
		\[L_j \ge |\cA_j| =N_{a(1+a)^j}(\CW, L_\infty).\]
		
		We   claim that 
		\begin{equation}\label{4-13}
		\cH_{t} (\CW, L_\infty) \ge \log \frac  {R} {4t},\   \   \   \forall   0<t<R.
		\end{equation} 
		To see this,  let $f^\ast\in \CW$ be such that $\|f^\ast\|_\infty =R$. Let $k=[\frac  R t]$. 
		Define $f_j=\frac { 2jt}R f^\ast$ for $0\leq j\leq k/2$. 
		Then $\{f_j\}_{0\leq j\leq \frac  k2} \subset \CW$ is  $2t$-separated in $L_\infty$-norm. It follows that 
		\[ N_{t} (\CW, L_\infty) \ge \frac  k2 \ge \frac  14 \frac  R t, \]
		which shows \eqref{4-13}.

		Now using \eqref{4-13}, we obtain 
		\begin{align*}
		\frac  1 {\lambda}  \sum_{j=j_0+1}^{J} \frac  1 {N_{a(1+a)^j}(\CW, L_\infty)}  \leq C R^{-1} \lambda^{-1}  \sum_{j=j_0+1}^{J} (1+a)^j\leq C \lambda^{-1}<1,
		\end{align*}
		provided that $\lambda>1$ is large enough. This proves \eqref{4-9}.

	\end{proof}

	\section{ Concluding remarks on  sampling discretization of   $L_p$ norms  for $2<p<\infty$}\label{sec:6}

	In this section, we give a few remarks on  sampling discretization of   $L_p$ norms  for $2<p<\infty$.
	
	\begin{enumerate}
		\item  The following  Nikolskii type inequality plays an important role in the  proof of Theorem \ref{IT3}:   
	\begin{equation}
	 \|f\|_\infty \leq C v^{\frac  1p}\|f\|_p,\   \    \  \forall f\in \Sigma_v(\Phi_N),\end{equation} 
	where the constant $C$ is independent of $f$, $v$ and $N$. 
 This inequality holds for $1\leq p\leq 2$  whenever  $\Phi_N$ is a uniformly bounded Riesz basis of $X_N$.  	 However, this is no longer  true for $p>2$.
	For example, take   $N=2^v$ and consider  the system  
	$$\Phi_N =\{ e^{2\pi i j x}\}_{j=1}^N$$ 
	on the interval $[0,1]$ equipped with the usual Lebesgue measure.  	
	 By the Littlewood-Paley inequality, we have that  for $f(x)=\sum_{j=1}^v e^{2\pi \mathbf i 2^j x}\in\Sigma_v(\Phi_N)$ and $2<p<\infty$, 
	\begin{align*}
	\|f\|_\infty =v > C  v^{\frac  1p}\|f\|_p\asymp v^{\frac  12+\frac  1p}.
	\end{align*}

\item Let  $\Phi_N$  be  a uniformly bounded Riesz basis of $X_N\subset L_2$ satisfying \eqref{X5a}.  By monotonicity of the $L_p$ norms, we have that for any integer  $1\leq v\leq N$, 
	\[  \Sigma^p_v(\Phi_N)\subset \Sigma^2_v(\Phi_N),\   \   p>2,\]
	which in particular implies that 
	\[  \sup_{f\in\Sigma_v^p(\Phi_N)} \|f\|_\infty \leq   \sup_{f\in\Sigma_v^2(\Phi_N)} \|f\|_\infty\leq  C v^{1/2}.    \]
Moreover,  using 	Theorem \ref{XT5}, we have that for $p>2$ and all integer $k\ge 1$,  
		\be\label{X5-6.2}
	\e_k(\Sigma_v^p(\Phi_N),L_\infty) \le 	\e_k(\Sigma_v^2(\Phi_N),L_\infty)\leq C\cdot  (\log N) \Bigl(\frac vk\Bigr)^{1/2},
	\ee	
 which    also yields  
	\begin{equation}\label{6.3}
	\cH_{t} ( \Sigma_v^p(\Phi_N), L_\infty) \leq C(p)  v\cdot  \Bigl(\frac {\log N} t\Bigr)^2,\    \    \  \forall  t>0.
	\end{equation}
On the other hand, a straightforward calculation  shows that  for any $\va\in (0, 1)$ and $p>2$, 
	\begin{align*}
	&\va^{-5}  \left(\int_{10^{-1}\va^{1/p}} ^{C v^{1/2}} u^{\frac  p2-1}    \Bigl(\int_{  u}^{ C v^{1/2} }\frac{\cH_{c_p \va t}( \Sigma_v^p(\Phi_N),L_\infty)} t \, dt\Bigr)^{\frac 12} du\right)^2\\
	&\leq   C (p) \va^{-7}   v^{p/2}  (\log N)^2.
	\end{align*}

Thus, an application of Theorem \ref{thm-4-2}   leads to

	\begin{Theorem}   Assume that $\Phi_N$ is a uniformly bounded Riesz basis of $X_N:=\mspan(\Phi_N)$ satisfying \eqref{Riesz} for some constants $0<R_1\leq R_2$.
		Let $2<p<\infty$ and let  $1\leq v\leq N$ be an integer. 		
		 Then for a large enough constant $C=C(p,R_1,R_2)$ and any $\va\in (0, 1)$,   there exist 
		$m$ points  $\xi^1,\cdots, \xi^m\in  \Og$  with 
				\begin{equation*}
		m\leq C\va^{-7}       v^{p/2} (\log N)^2,
		\end{equation*}
			such that for any $f\in  \Sigma_v(\Phi_N)$, 
		\[ (1-\va) \|f\|_p^p \leq \frac   1m \sum_{j=1}^m |f(\xi^j)|^p\leq (1+\va) \|f\|_p^p. \]	
	\end{Theorem}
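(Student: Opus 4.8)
The plan is to deduce the theorem from the conditional Theorem~\ref{thm-4-2}, the entropy estimate \eqref{X5-6.2}--\eqref{6.3} for $\Sigma_v^p(\Phi_N)$, and the bound $\sup_{f\in\Sigma_v^p(\Phi_N)}\|f\|_\infty\le Cv^{1/2}$, essentially as carried out in the discussion preceding the statement; the only point needing care is that those entropy estimates were derived there under the extra hypothesis \eqref{X5a}, which is \emph{not} assumed here. As in the proof of Theorem~\ref{IT3} in Section~\ref{Z} (and in the proof of Corollary~\ref{Cor5.2}), I would remove \eqref{X5a} by first passing to an auxiliary discrete probability space on which its analogue holds automatically. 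What makes the range $p>2$ accessible, despite the failure of the Nikolskii inequality $\|f\|_\infty\le Cv^{1/p}\|f\|_p$ and hence of hypothesis \eqref{5-5b} of Corollary~\ref{Cor5.1}, is the elementary inclusion $\Sigma_v^p(\Phi_N)\subset\Sigma_v^2(\Phi_N)$, valid for $p\ge2$ on a probability space, which lets one import the $p=2$ entropy bound of Theorem~\ref{XT5} verbatim.

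\textbf{Step 1: reduction to a discrete space.} Since $\|\ff_j\|_\infty\le1$, \eqref{Riesz} gives $\|f\|_\infty\le R_1^{-1}N^{1/2}\|f\|_2\le R_1^{-1}N^{1/2}\|f\|_p$ for all $f\in X_N$ (the last step uses $p\ge2$ and that $\mu$ is a probability measure), so Lemma~\ref{lem-4-1} applies with $\beta=p/2$ and $K=\max\{2,R_1^{-2}\}$. Applying it with accuracy $\va/3$ in place of $\va$ produces a set $\Omega_{m_1}=\{\xi^1,\dots,\xi^{m_1}\}\subset\Omega$ with $m_1\le C(p,R_1)\va^{-2}\log(6/\va)N^{p/2+1}\log N$ such that
\[
(1-\va/3)\|f\|_p^p\le\|f\|_{L_p(\Omega_{m_1})}^p\le(1+\va/3)\|f\|_p^p,\qquad f\in X_N,
\]
and, likewise, $\frac45\|f\|_2^2\le\|f\|_{L_2(\Omega_{m_1})}^2\le\frac65\|f\|_2^2$ for all $f\in X_N$. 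We may assume $\log m_1\le C(p,R_1)\log N$ by restricting to $\va\ge N^{-c_p}$ for a suitable $c_p>0$ (for smaller $\va$, Step~1 alone already produces $\le m$ points meeting the required accuracy). Then $\Phi_N$ is a uniformly bounded Riesz basis of $(X_N,\|\cdot\|_{L_2(\Omega_{m_1})})$, and the regular Nikolskii inequality for the norms $\ell_q^{m_1}$ yields the discrete analogue of \eqref{X5a}, namely $\|f\|_{L_\infty(\Omega_{m_1})}\le C(p,R_1)\|f\|_{L_{\log N}(\Omega_{m_1})}$ for all $f\in X_N$.

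\textbf{Step 2: applying the conditional theorem on $\Omega_{m_1}$.} Next I would apply Theorem~\ref{thm-4-2} on the probability space $\Omega_{m_1}$ (with normalized counting measure) to $\CW:=\Sigma_v^p(\Phi_N)$, reading $\|\cdot\|_p$ as $\|\cdot\|_{L_p(\Omega_{m_1})}$. Its hypotheses are checked on $\Omega_{m_1}$ as follows: the scaling property \eqref{5.1b} holds because $\Sigma_v(\Phi_N)$ is a cone; the bound $R:=\sup_{f\in\CW}\|f\|_{L_\infty(\Omega_{m_1})}\le C(R_1)v^{1/2}$ (and $R\ge1$) follows from the discrete Riesz lower bound together with $\|\cdot\|_{L_2(\Omega_{m_1})}\le\|\cdot\|_{L_p(\Omega_{m_1})}$ ($p\ge2$); and, crucially, Theorem~\ref{XT5} applied in the discrete norm---legitimate by the discrete version of \eqref{X5a} from Step~1---gives $\e_k(\Sigma_v^2(\Phi_N),L_\infty(\Omega_{m_1}))\le C(p,R_1)(\log N)(v/k)^{1/2}$, whence, since $\Sigma_v^p(\Phi_N)\subset\Sigma_v^2(\Phi_N)$ and by \cite[Lemma 2.1]{DPSTT2}, $\cH_t(\Sigma_v^p(\Phi_N),L_\infty(\Omega_{m_1}))\le C(p,R_1)v\bigl((\log N)/t\bigr)^2$ for all $t>0$, i.e.\ \eqref{6.3} in the discrete norm. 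Feeding $R\le C(R_1)v^{1/2}$ and this entropy bound into the quantity \eqref{5-2-c} with $\va$ replaced by $\va/3$, and estimating the double integral there exactly as in the discussion preceding the statement, one finds that every integer $m\le C(p,R_1,R_2)\va^{-7}v^{p/2}(\log N)^2$ is admissible; Theorem~\ref{thm-4-2} then produces points $x_1,\dots,x_m\in\Omega_{m_1}$ with $(1-\va/3)\|f\|_{L_p(\Omega_{m_1})}^p\le\frac1m\sum_{j=1}^m|f(x_j)|^p\le(1+\va/3)\|f\|_{L_p(\Omega_{m_1})}^p$ for all $f\in\CW$, and so, by homogeneity, for all $f\in\Sigma_v(\Phi_N)$. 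Composing with Step~1 and using $(1-\va/3)^2\ge1-\va$ and $(1+\va/3)^2\le1+\va$ for $\va\in(0,1)$ finishes the proof.

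The main obstacle is not any single estimate but the bookkeeping of Step~1: one must check that the Riesz-basis property, the Nikolskii-type inequality underlying \eqref{X5a}, and the inclusion $\Sigma_v^p\subset\Sigma_v^2$ all survive on the auxiliary set $\Omega_{m_1}$, and that the larger exponent $\beta=p/2$---forced because the best Nikolskii inequality for $X_N$ in $L_p$ degrades as $N^{1/2}$ when $p>2$---still keeps $\log m_1$ comparable to $\log N$. Once this is arranged, the entropy computation and the appeal to Theorem~\ref{thm-4-2} are exactly those already performed in the remarks above.
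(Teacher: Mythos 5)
Your proposal is correct and follows essentially the same route as the paper: the inclusion $\Sigma_v^p(\Phi_N)\subset\Sigma_v^2(\Phi_N)$ for $p>2$, the entropy bound from Theorem~\ref{XT5}, and then the conditional Theorem~\ref{thm-4-2} with the double-integral computation already carried out in the remarks preceding the statement. You are in fact more explicit than the paper on one point it leaves implicit, namely removing assumption \eqref{X5a} by first passing to the auxiliary discrete space via Lemma~\ref{lem-4-1} (here with $\beta=p/2$), exactly as in the proof of Theorem~\ref{IT3} in Section~\ref{Z}; your bookkeeping there (including the restriction $\va\ge N^{-c_p}$ to keep $\log m_1\sim\log N$) is sound.
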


	\end{enumerate}

\Addresses

\end{document}